\newtheorem{thm}{Theorem}
\newtheorem{lemma}{Lemma}
\newtheorem{cor}{Corollary}
\newtheorem{con}{Conjecture}
\newtheorem{prob}{Problem}
\newtheorem{prop}{Proposition}
\newtheorem{defi}{Definition}
\newtheorem{rem}{Remark}
\newcommand{\CA}{{\rm CA}}
\newcommand{\MA}{{\rm MA}}
\newcommand{\A}{{\mathcal{A}}}
\newenvironment{customthm}[1]
  {\innercustomthm}
  {\endinnercustomthm}
\newenvironment{customcon}[1]
  {\innercustomcon}
  {\endinnercustomcon}
\begin{document}

\title{On Sloane's persistence problem}

\author{Edson de Faria}
\address{Instituto de Matem\'atica e Estat\'istica, Universidade de S\~ao Paulo}
\curraddr{Rua do Mat\~ao 1010, 05508-090, S\~ao Paulo SP, Brasil}
\email{edson@ime.usp.br}

\author{Charles Tresser}
\address{IBM, P.O.~Box 218}
\curraddr{Yorktown Heights, NY 10598, USA}
\email{charlestresser@yahoo.com}

\thanks{This work has been supported by ``Projeto Tem\'atico Din\^amica em Baixas Dimens\~oes'' FAPESP Grant 2011/16265-2, and 
by FAPESP Grant 2012/19995-0}

\subjclass[2010]{Primary 37E10; Secondary 37A45, 37A15, 11K16.}

\keywords{Persistence, circle maps, ergodic $\mathbb{Z}^d$-actions}

\begin{abstract} We investigate the so-called persistence problem of Sloane, exploiting connections with the dynamics 
of circle maps and the ergodic theory of $\mathbb{Z}^d$ actions.  
We also formulate a conjecture concerning the asymptotic distribution of digits in long products of finitely many 
primes whose truth would, in particular, solve the persistence problem. The heuristics that we propose to 
complement our numerical studies can be thought in terms of a simple model in statistical mechanics.
\end{abstract}

\maketitle

\section{Introduction}\label{intro}

In \cite{S}, Sloane proposed the following curious problem. Take a non-negative integer, write down its decimal representation, and multiply 
its digits together, getting a new non-negative integer. Repeat the process until a single-digit number is obtained. The problem can thus be stated: 
Is the number of steps taken in this process uniformly bounded?

\subsection{General formulation} Let us start with a general formulation of Sloane's problem, 
while at the same time introducing some of the notation that we will use. 
Given a natural number $n$, and an {\it integer base\/} $q>1$, 
consider the base-$q$ expansion of the number $n$, say
\begin{equation}\label{ninbaseb}
 n\;=\;\left[d_1d_2\cdots d_k\right]_q \;=\; \sum_{j=1}^k d_j q^{k-j}\ ,
\end{equation}
where each digit $d_j\in \{0,1,\dots, q-1\}$ (and $d_1\neq 0$ when $n\geq 1$). Let $S_q(n)$ denote the product of all such digits, {\it i.e.\/},
\[
 S_q(n)\;=\;\prod_{j=1}^k d_j\ .
\]
Thus $n\mapsto S_q(n)$ defines a map $S_q:\mathbb{Z}^+\to \mathbb{Z}^+$, which we call the {\it Sloane map\/
in base $q$}. 
Clearly, such map can be iterated: 
write $S_q(n)$ in base $q$, multiply 
its digits to obtain $S_q(S_q(n))$, and so on. 
In particular, given any $n\in \mathbb{Z}^+$ we can consider its {\it orbit\/} under the Sloane map, namely
\[
 n\, ,\,S_q(n)\, , \, S_q^2(n)\, , \, \ldots \, , \, S_q^m(n)\, , \, \ldots
\]
The following proposition ensures that this sequence always stabilizes after a finite number of steps. 

\begin{prop}\label{trivialstart}
We have $S_q(n)<n$ for all $n\geq q$ (i.e., as long as the base $q$ expansion of $n$ has at least two digits).
\end{prop}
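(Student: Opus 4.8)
The plan is to compare the product of the digits of $n$ against a single term of its base-$q$ expansion, namely the leading term $d_1 q^{k-1}$. First I would dispose of the degenerate case in which some digit $d_j$ vanishes: then $S_q(n)=0<q\le n$, so the inequality holds trivially, and we may henceforth assume that every digit is nonzero. Since $n\ge q$ means precisely that the expansion (\ref{ninbaseb}) has $k\ge 2$ digits, the leading digit satisfies $d_1\ge 1$.

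Next I would bound the product from above by replacing each of the remaining digits by the largest value it can take, namely $q-1$, while keeping the leading digit as an explicit factor. Because $d_j\le q-1$ for every $j$, factoring out $d_1$ gives
\[
 S_q(n)\;=\;d_1\prod_{j=2}^{k}d_j\;\le\;d_1(q-1)^{k-1}\ .
\]
On the other hand, discarding the nonnegative lower-order terms in the expansion (\ref{ninbaseb}) yields the matching lower bound $n\ge d_1 q^{k-1}$.

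The conclusion then rests on the strict inequality $(q-1)^{k-1}<q^{k-1}$, which holds because $q-1<q$ and the common exponent $k-1$ is at least $1$. Multiplying by the positive integer $d_1$ and chaining the two estimates gives
\[
 S_q(n)\;\le\;d_1(q-1)^{k-1}\;<\;d_1 q^{k-1}\;\le\;n\ ,
\]
which is what we want. There is essentially no obstacle here; the only place the two-digit hypothesis $n\ge q$ enters is in guaranteeing that the exponent $k-1$ is strictly positive, so that passing from base $q$ to $q-1$ in the high-order factor produces a genuine, rather than merely non-strict, loss. It is worth noting that the argument is robust: the same estimate shows the stronger bound $S_q(n)\le d_1(q-1)^{k-1}$, which decays far faster than $n$ grows, foreshadowing that orbits under $S_q$ collapse very rapidly.
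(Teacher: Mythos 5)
Your proof is correct and follows essentially the same route as the paper's: bound the product by $d_1(q-1)^{k-1}$, compare with $d_1q^{k-1}\le n$, and use $k\ge 2$ to make the middle inequality strict. The separate treatment of a vanishing digit is harmless but unnecessary, since the chain of inequalities already covers that case.
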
  

\begin{proof}
Write $n$ in base $q$ as in \eqref{ninbaseb}, and note that $k>1$. Since $d_j\leq q-1$ for all $j$, it follows that
\[
 S_q(n)\;=\;d_1\cdot \prod_{j=2}^k d_j\;\leq\; d_1\cdot (q-1)^{k-1} \;<\;d_1\,q^{k-1}\;\leq\; n\ .
\]
\end{proof}

From Proposition \ref{trivialstart} we deduce that $n$ is a fixed point of $S_q$ if and only if $k=1$.  
It also follows from Proposition \ref{trivialstart} that every orbit of 
$S_q$ is finite and converges to some $d<q$ that is a fixed point.
In other words, there exists a minimum number $\nu_q(n)$ such that $S_q^i(n)=S_q^{\nu_q(n)}(n)$ for all
$i \geq \nu_q(n)$. Hence $\nu_q(n)$  {\it is the smallest number $m$ such that $S_q^{m}(n)$ has a single digit\/}.  
Sloane asked in \cite{S} whether such minimum number of steps until a fixed point is uniformly bounded.  
The number  $\nu_q(n)$  is known as the {\it persistence\/}{\footnote{What we call persistence in this paper is sometimes referred to as 
{\it multiplicative persistence\/} elsewhere, to distinguish it from the similarly defined concept of {\it additive persistence\/}, 
introduced by Hinden \cite{H}. Since we will only consider multiplicative persistence, we will have no use for the adjective.}}
 of $n$ in base $q$.
Numerical evidence that $\nu_q(n)$ is bounded has been collected for some values of $q$.  
Furthermore, the answer to Sloane's question is trivially positive for $q\,=\,2$ since for any 
$n\geq 0$ one has  $S_2(n)\in\{0,1\}$, and $\{0,1\}$ is the fixed-point set of $S_2$. 
The problem -- known as the {\it persistence problem\/} -- can be stated as follows.

\begin{prob}\label{prob1}
For a given $q>2$, is there a positive number $B(q)$ such that $\nu_q(n)\leq B(q)$ for all $n\/$ ? 
 \end{prob}

A related set of issues goes as follows (considering now $B(q)=\sup_n\nu_q(n)$ as an element of $\mathbb{Z}^+\cup\infty$). 

\begin{prob}\label{prob2}
What is the behavior of $B(q)$ seen as a function of $q$? More precisely, one can ask: 
\begin{enumerate}
\item[(a)] Is the answer to Problem \ref{prob1} positive for all, or all but finitely many, or most, or infinitely many, or perhaps  
only finitely many values of $q$?
\item[(b)]  What is the asymptotic behavior of $B(q)$  as $q\to\infty$? 
\end{enumerate}
 \end{prob}

Here are some known facts about the persistence problem in various bases:
\begin{enumerate}
 \item In base $q\,=\,2$, the situation  is rather trivial: every positive integer has persistence $1$ in base $2$.
 \item In base $q=3$, no number with persistence
greater than $3$ has ever been found. 
 \item In base $q=10$, the number $n=68889$ has persistence $7$, because under the Sloane map $S_{10}$ we have
\[
 68889 \mapsto 27648 \mapsto 2688 \mapsto 768 \mapsto 336 \mapsto 54 \mapsto 20 \mapsto 0
\]
In fact, this is the smallest number with persistence equal to $7$.
 \item Still in base $q=10$, the number $n=277777788888899$ has persistence $11$. It is the smallest number with persistence equal to $11$. 
 \item It is conjectured that $\nu_{10}(n)\leq 11$ for all $n$. This has been checked for all $n$ up to $10^{233}$ .
\end{enumerate}

\subsection{Goals and endeavors}
In this paper, we have two main goals. The first goal is   
to examine the persistence problem in the light of some Dynamical Systems considerations. 
We will show that Sloane's question (Problem \ref{prob1}) has an affirmative answer in a certain {\it probabilistic\/} sense. 
Roughly speaking, we will show that {\it for any base $q$, the set of natural numbers $n$ with persistence $\geq 3$, {\it i.e.\/} such that $S_q^2(n)\neq 0$, 
is an extremely rarified subset of $\mathbb{Z}^+$\/}. The probabilistic sense in question 
will be made progressively clear in \S \ref{sec:erg1} and \S \ref{sec:erg2}. 

We will see in particular that Problem \ref{prob1} has a positive answer for $q=3$ if a precise orbit that we will fully describe has a ``generic'' 
behavior under the $\mathbb{Z}$-action determined by a well-defined piecewise affine degree one circle map. Similarly, Problem \ref{prob1} has a positive 
answer for $q=4$ if two precise orbits that we will fully describe have a ``generic'' behavior under the $\mathbb{Z}$-action determined by another 
well-defined piecewise affine degree one circle map. The affine circle maps that we will encounter here are defined by $q$ and a number $p<q$, 
a digit in base $q$. 

For bases $q>4$, the relevant dynamical systems for the Sloane map are no longer $\mathbb{Z}$-actions, but rather  
${\mathbb{Z}}^k$-actions with $k>1$. More precisely, they are given by certain free abelian groups of piecewise affine degree-one circle maps. 
We will exploit some simple ergodic properties of such 
free-abelian actions in order to derive our main probabilistic result on the Sloane map, namely Theorem \ref{rankkdensity}. 

Our second goal is 
to formulate a very general conjecture, namely Conjecture \ref{conj:convvtoequi}, concerning the asymptotic distribution of digits in the base $q$ 
expansion of long products 
whose factors are chosen from a given finite set of primes. This conjecture is 
conveniently 
formulated in terms of certain objects 
that we call {\it multiplication automata\/}, in part because their time evolution produces patterns that resemble those produced by the evolution of 
(one-dimensional) cellular automata. 
The most general form of the conjecture, as stated in \S \ref{multaut},  is too broad to allow 
thorough numerical tests. Thus, short of a proof, we felt the need to provide  
an heuristic explanation lending support to this conjecture. Such heuristics is given in the 
language of {\it inhomeogeneous Markov chains\/}, as a form of convergence to equilibrium for the 
evolution of our automata. The multiplication automata and the associated convergence to fair 
distribution may be of interest to the physics of growth processes, and perhaps to other aspects 
of statistical mechanics.

\subsection{The Erd{\"o}s-Sloane map}
In order to avoid possible misunderstandings, we warn the reader that another map has been studied, inspired by $S_q$: let us call it the 
Erd{\"o}s-Sloane map, and denote it by 
$S^*_q$. For each natural number $n$, $S^*_q(n)$ is the product of the non-zero digits of $n$ in base $q$. 
According to Guy \cite{G}, this map was introduced by Erd{\"o}s.  
Since our approach in this paper is based on the conjectured 
generic property that all digits should become equally probable for (products of) high powers of digits, we have nothing new to 
say about $S^*_q$ (although some results we have about $S_3$ can be interpreted in terms of $S^*_3$).

\section{Conjectures, remarks, and first simple results}

\subsection{A trivial remark and some conjectures}
Let us start by making a trivial but important remark. Choose some base $q>1$. 
Looking back at the expansion given in \eqref{ninbaseb}, it is clear that we have the following {\it trivial dichotomy}:

\begin{enumerate}
 \item {\it Either\/} one of the digits $d_j$ is equal to zero, in which case $S_q(n)=0$;
 \item {\it Or else\/} $S_q(n)$ is equal to a product of digits in $1,2,\dots, q-1$.
\end{enumerate}

This trivial dichotomy, 
together with first observations of relevant but scarce numerical data, 
suggest 
us an obvious strategy.  
In order to answer Sloane's question in the affirmative for base $q$, it suffices to establish the following.

\begin{con}\label{basebpersist}
 There exists a positive integer $k_0(q)$ such that, for all $k\geq k_0(q)$, the base $q$ 
 expansion of any product of $k$ digits greater than $1$ for base $q$ has at least one digit equal to zero.
\end{con}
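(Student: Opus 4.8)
The plan is to recast Conjecture \ref{basebpersist} as a statement about $(q-1)$-\emph{smooth} integers and then attack it through digit equidistribution. First I would observe that a product of digits $>1$ is exactly a positive integer all of whose prime factors are at most $q-1$: each digit $d\in\{2,\dots,q-1\}$ has prime factors $\le q-1$, and conversely every such prime is itself a digit. Writing such a product as $N=\prod_{p\le q-1}p^{a_p}$, one has $\Omega(N)\ge \log N/\log(q-1)$, where $\Omega$ counts prime factors with multiplicity, so the attainable number of digit-factors tends to infinity precisely when $N$ does. A short check then shows that ``$N$ is a product of $k$ digits $>1$ for some $k\ge k_0$'' is equivalent to ``$N$ is $(q-1)$-smooth with $\Omega(N)\ge k_0$,'' and the two quantifications $\exists k_0$ and $\exists N_0$ interchange. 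Hence the conjecture is equivalent to: \emph{there is $N_0(q)$ such that every $(q-1)$-smooth integer $N\ge N_0(q)$ has at least one zero digit in base $q$.} This is the form I would work with, since the number of base-$q$ digits of $N$, namely $L(N)=1+\lfloor\log_q N\rfloor$, then tends to infinity, and ``having a zero digit'' becomes a condition on roughly $L(N)$ digit slots.

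Next I would dispose of the cases accessible through \emph{trailing} digits and isolate the genuinely hard core. The $i$-th digit of $N$ vanishes iff $\{N/q^{i+1}\}\in[0,1/q)$, and the last $m$ digits are governed by $N\bmod q^m$. When $N$ is divisible by large powers of each prime dividing $q$, the trailing digits are forced, and frequently a zero appears at once (for instance in base $10$ whenever $N$ is a multiple of both $2$ and $5$). The remaining, hard, cases are the smooth numbers that avoid the prime factors of $q$, or carry them with bounded exponent; for these the last digits never vanish, so a zero must be produced in the \emph{bulk}. Here I would invoke the dynamical picture developed later in the paper: encoding $N$ by its exponent vector $(a_p)$, the leading and intermediate digits are controlled by the orbit of $\bigl(\sum_p a_p\log_q p\bigr)$ and its carries under the relevant $\mathbb Z^d$-action on the torus. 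Equidistribution of this orbit --- the content of Conjecture \ref{conj:convvtoequi} --- would make each digit equal to $0$ with asymptotic frequency $1/q$, forcing at least one zero once $L(N)$ is large. This yields the \emph{conditional} theorem I would actually aim for: Conjecture \ref{conj:convvtoequi}, or a suitable quantitative strengthening of it, implies Conjecture \ref{basebpersist}, and hence a positive answer to Problem \ref{prob1}.

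The main obstacle is the gap between statistical and universal statements, and I do not expect it to yield to soft arguments. Equidistribution and the attendant density estimates (such as Theorem \ref{rankkdensity}) show that \emph{almost every} smooth number, or a density-one set of them, has a zero digit; they do not exclude a sparse exceptional sequence of smooth numbers engineered to avoid $0$ in every slot. Upgrading ``density one'' to ``all but finitely many'' requires effective equidistribution with an error term beating the digit count $L(N)\sim\log N$ \emph{uniformly} over all smooth $N$, and for multiplicatively structured integers this is far beyond current technology. Indeed, the conjecture contains as a special case the assertion that every sufficiently large power of $2$ contains the digit $0$ in base $10$ (the $\{2\}$-smooth case), a notoriously open problem, and more generally it subsumes statements about the digits of $\{2,3,7\}$-smooth numbers. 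A second, structural difficulty is that the torus model cleanly governs only the leading digits, whereas the zeros one needs typically occur in the interior, and the carry propagation coupling the additive $\log$-data to the actual base-$q$ digits is nonlinear and mixes all scales. For these reasons I would present the result conditionally and, unconditionally, settle for the density-one version, which already establishes the ``rarified'' phenomenon the paper advertises but stops short of the full conjecture.
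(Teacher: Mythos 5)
The statement you were given is Conjecture \ref{basebpersist}, which the paper itself leaves unproven: it appears only as a sufficient condition for a positive answer to Problem \ref{prob1}, supported by the unconditional density-one results (Corollary \ref{rankonedensity}, Theorem \ref{rankkdensity}) and by the stronger equidistribution Conjecture \ref{conj:convvtoequi} plus heuristics. Your proposal correctly recognizes that no proof is available and follows essentially the same route as the paper --- reduction to $(q-1)$-smooth numbers, density-one statements via unique ergodicity of the associated circle actions, and a conditional derivation from digit equidistribution --- while accurately naming the density-one-versus-all-$N$ gap that the authors likewise do not close.
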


As soon as a product of digits becomes divisible by $q$, a zero appears as its right-most digit when written in base 
$q$, and therefore the next iteration of the Sloane map yields $0$ as the result. This trivial remark allows us to concentrate 
only on those products that are not divisible by the base $q$. 
The effect of that will be dramatic when $q=4$. The following two conjectures are implied by Conjecture \ref{basebpersist}. 

In the case of base $q=3$, the only non-zero values assumed by the Sloane map are powers of $2$. In other words, we only need to 
investigate the orbit of $1$ under the doubling map $x\mapsto 2\cdot x$. 

\begin{customcon}{1a}\label{base3persist}
There exists a positive integer $k_{3,2}$ such that, for all $k\geq k_{3,2}$, the base-$3$ expansion of $2^k$ has at least one digit equal to zero.
\end{customcon}

Such a statement is reminiscent of a conjecture by Erd{\"o}s to the effect that there is always a 2 among the digits in 
base 3 of $2^k$ for all $k$ sufficiently large. This question has been recently adressed by Lagarias in \cite{L}.    

In the case of base $q=4$, in principle all products of the form $2^m\cdot 3^n$ would need to be examined.  
But since the basis 4 divides $2^m\cdot 3^n$ as soon as $m\geq 2$, we only have to consider: (a) powers of 3 or 
(b) products of the form $2\cdot 3^m$. In other words, we only need to consider the orbits of 1 and 2 under the 
tripling map $x\mapsto 3\cdot x$.

\begin{customcon}{1b}\label{base4persist}
There exists a positive integer $k_{4,3}$ such that, for all $k\geq k_{4,3}$, the base-$4$ expansions of $3^k$ 
and $2\cdot 3^k$ each have at least one 
digit equal to zero.
\end{customcon}

There is considerable computational evidence in favor of these conjectures.
Proving these conjectures is certainly sufficient to solve the persistence problem for the corresponding bases.

\subsection{A weak estimate on $\nu_3(n)$}
Let us take the time to establish a weak estimate on $\nu_3(n)$, the minimum stability time of $n\in \mathbb{Z}^+$ (in base $3$) 
introduced in \S \ref{intro}. This should be compared to a similar estimate proved by Erd\"os concerning the Erd\"os-Sloane map.  
First, a very simple lemma.

\begin{lemma}\label{trivlem1}
 If $n>3$, then at least one of the digits in the base-$3$ expansion of $2^n$ is not equal to $2$.
\end{lemma}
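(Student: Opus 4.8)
The plan is to prove Lemma~\ref{trivlem1} by contradiction: suppose that for some $n>3$, every digit in the base-$3$ expansion of $2^n$ equals $2$. Then $2^n$ would have the form
\[
  2^n \;=\; \sum_{j=0}^{k-1} 2\cdot 3^j \;=\; 2\cdot\frac{3^k-1}{3-1} \;=\; 3^k-1\ ,
\]
where $k$ is the number of base-$3$ digits of $2^n$. Thus the assumption forces the Diophantine relation $2^n=3^k-1$, and the task reduces to showing that this equation has no solutions with $n>3$.

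First I would record the small solutions: $2^1=3^1-1$ (so $n=1$) and $2^3=3^2-1$ (so $n=3$), which is exactly why the hypothesis $n>3$ appears. Then I would argue that there are no further solutions. The cleanest elementary route is a congruence/$2$-adic valuation argument. Reducing $3^k-1\equiv 0 \pmod{2^n}$ and examining $3^k \pmod 8$ shows that $3^k-1$ is divisible by $8$ only when $k$ is even, and more precisely one uses the lifting-the-exponent identity $v_2(3^k-1)=v_2(3^2-1)+v_2(k)-1 = 2 + v_2(k)$ for even $k$ (and $v_2(3^k-1)=1$ for odd $k$). Matching $v_2(3^k-1)=n$ against the size constraint $3^k-1=2^n$ then pins down $k$ to a tiny range, after which the two listed solutions are seen to be the only ones.

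The main obstacle is establishing that $2^n=3^k-1$ has only the solutions $n\in\{1,3\}$; this is a special case of Catalan/Mih\u{a}ilescu-type reasoning, but here it is entirely elementary because one side is a pure power of $2$. Concretely, once $k$ is forced even, write $k=2m$ and factor $3^k-1=(3^m-1)(3^m+1)$; the two factors differ by $2$ and are therefore coprime apart from a shared factor of $2$, so for their product to be a power of $2$ each factor must itself be a power of $2$, forcing $3^m-1$ and $3^m+1$ to be consecutive even powers of $2$ --- only $2$ and $4$ --- whence $m=1$, $k=2$, $n=3$. The odd-$k$ case gives $v_2(3^k-1)=1$, i.e.\ $2^n=3^k-1$ with $n=1$, yielding $k=1$. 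This exhausts the possibilities and contradicts $n>3$, completing the proof. I would present the factoring argument as the core step, since it is both self-contained and avoids invoking heavy machinery.
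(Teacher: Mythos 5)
Your reduction is identical to the paper's: assume all $k$ digits equal $2$, sum the geometric series, and arrive at $2^n=3^k-1$. Where you diverge is in how that Diophantine equation is killed. The paper simply observes that $2^n=3^k-1$ with $n>3$ would give a solution of Catalan's equation $3^x-2^y=1$ and cites the fact that this is impossible, relegating to a footnote the remark that the special case with bases $2$ and $3$ admits an elementary proof (with a pointer to LeVeque). You instead supply that elementary proof in full: for odd $k$ one has $3^k-1\equiv 2\pmod 4$, forcing $n=1$; for even $k=2m$ the factorization $3^k-1=(3^m-1)(3^m+1)$ into two powers of $2$ differing by $2$ forces them to be $2$ and $4$, hence $m=1$, $k=2$, $n=3$. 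This is correct and complete, and it exactly recovers the two small solutions $n\in\{1,3\}$ that explain the hypothesis $n>3$. What your version buys is self-containedness --- no appeal to Mih\u{a}ilescu or even to an external elementary reference --- at the cost of a few extra lines; the paper's version buys brevity by outsourcing the key arithmetic fact. The auxiliary LTE remark in your second paragraph is dispensable (and slightly underspecified as stated), but since you explicitly designate the factoring argument as the core step, the proof stands on that alone.
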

 \begin{proof}
Suppose the base-$3$ expansion of $2^n$ has exactly $k$ digits, all equal to $2$. Then $2^n=3^k-1$, and we have a solution to Catalan's equation
$3^x-2^y=1$ with $y=n>3$, which is impossible. 
{\footnote{It is not necessary to use the highly non-trivial result about the full Catalan's conjecture (concerning the Diophantine equation 
$a^x-b^y=1$ --see \cite{M}). The special case needed here (with bases $2$ and $3$, which are {\it prime\/}) can be proved by elementary means -- 
see \cite[p.~85]{LeV}.}}
 \end{proof}

\begin{rem}\label{trivrem1}
 It follows from the proof that $S_3(2^n)\leq 2^{k-1}$. 
\end{rem}

Let us now present our weak estimate. Recall that if $N\in \mathbb{Z}^+$, then the number of digits in the base-$q$ 
expansion of $N$ is equal to $1+ \lfloor \log_q{N}\rfloor$. 

\begin{prop}
 For all $n\geq 3$, we have $\nu_3(n)\leq 2(1+\log_3\log_3{n})$.
\end{prop}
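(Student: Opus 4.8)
The plan is to reduce everything to the orbit of a single power of $2$ and then exploit the contraction supplied by Remark \ref{trivrem1}. First I would observe that after one step the arithmetic trivializes: by the trivial dichotomy, $S_3(n)$ is either $0$ (in which case $\nu_3(n)=1$ and there is nothing to prove) or a product of digits drawn from $\{1,2\}$, hence a power of two, $S_3(n)=2^{m_1}$, where $m_1$ is the number of digits of $n$ equal to $2$. Since the base-$3$ expansion of $n$ has $1+\lfloor\log_3 n\rfloor$ digits, this forces $m_1\le 1+\log_3 n$. Thus it suffices to estimate how many further applications of $S_3$ are needed to bring $2^{m_1}$ down to a single digit.

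The engine of the argument is that the \emph{exponent} contracts geometrically. Indeed, for a power $2^{m}$ with $m>3$, its number of base-$3$ digits is $k=1+\lfloor m\log_3 2\rfloor$, and Remark \ref{trivrem1} gives $S_3(2^{m})\le 2^{k-1}$. Writing $S_3(2^{m})=2^{m'}$, this reads $m'\le k-1=\lfloor m\log_3 2\rfloor\le (\log_3 2)\,m$. Because $\log_3 2<1$, each step shrinks the exponent by a fixed multiplicative factor, so starting from $m_1$ and applying the estimate repeatedly (as long as the current exponent exceeds $3$), the exponent after $j$ steps is at most $(\log_3 2)^{j}m_1$.

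I would then count. The exponent falls below the threshold value $3$ after at most of the order of $\log_{1/\log_3 2}(m_1)$ steps, and since $m_1\le 1+\log_3 n$ this is of the order of $\log_3\log_3 n$. Once the exponent is at most $3$ we are looking at one of $1,2,4,8$, and a direct check (for instance $8\mapsto 4\mapsto 1$) shows that at most two additional applications of $S_3$ produce a single digit. Adding the initial step $n\mapsto 2^{m_1}$, the contraction steps, and this bounded tail, and carrying the constants through, should assemble into the bound $\nu_3(n)\le 2\left(1+\log_3\log_3 n\right)$.

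The hard part is really the second paragraph in disguise: all the genuine work lies in upgrading the qualitative statement ``at least one digit of $2^m$ is not a $2$'' (Lemma \ref{trivlem1}, via the Catalan-type equation) into a true \emph{multiplicative} contraction of the exponent. Without Remark \ref{trivrem1} one has only the weak $m'<m$ coming from Proposition \ref{trivialstart}, which yields a decay linear in $\log_3 n$ rather than the desired doubly-logarithmic rate. The remaining points I expect to be purely organizational: treating by hand the small-exponent regime $m\le 3$ where the contraction estimate no longer applies, and keeping track of the additive constants and of the passage from the crude digit count $m_1\le 1+\log_3 n$ to the clean final form, so that the accumulated slack stays within the stated factor.
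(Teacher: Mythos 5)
Your proposal follows essentially the same route as the paper's proof: reduce after one step to a power of two with exponent $m_1\le 1+\log_3 n$, use Lemma \ref{trivlem1} and Remark \ref{trivrem1} to obtain the multiplicative contraction $n_{j+1}\le (\log_3 2)\,n_j$ of the exponent, count the roughly $\log_3\log_3 n$ contraction steps until the exponent drops to at most $3$, and absorb the small-exponent tail into a bounded number of extra iterations. The only piece you defer --- ``carrying the constants through'' --- is exactly the final computation the paper performs explicitly, so the two arguments coincide in substance.
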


\begin{proof}
 Let $n>3$ be given, and assume $S_3(n)\neq 0$, otherwise there is nothing to prove. Thus, suppose $n_1,n_2,\ldots,n_m$ are such that
\begin{enumerate}
 \item[(i)] $n_j>3$ for $j=1,2,\ldots, m$;
 \item[(ii)] $S_3(n)=2^{n_1}$;
 \item[(iii)] $S_3(2^{n_j})=2^{n_{j+1}}$ for $j=1,2,\ldots, m-1$;
 \item[(iv)] $m$ is {\it maximal\/} with the above properties.
\end{enumerate}
From these properties we see that the only possible values for $S_3(2^{n_m})$ are $0,1,2,2^2$ or $2^3$. This implies that $\nu_3(n)\leq 3+ m$. 
Hence it suffices to find a suitable bound for the number $m$. 

Let $k_j$ denote the number of digits in the base-$3$ expansion of $n_j$, for $j=1,2,\ldots,m$. Since $n_j>3$, we deduce from Lemma \ref{trivlem1}
and Remark \ref{trivrem1} that $S_3(2^{n_j})=2^{n_{j+1}}\leq 2^{k_j-1}$. But we also know that $k_j=1+ \lfloor n_j\log_3{2}\rfloor$. Therefore we have
\[
 n_{j+1}\;\leq\; \alpha n_j\ , \ \ \ \textrm{for}\ \ j=1,2,\ldots, m_1\ ,
\]
where $\alpha=\log_3{2}$. From this it follows that
\[
 3\;<\;n_m\;\leq\; \alpha^{m-1} n_1\ .
\]
But we also have $n_1\leq \log_3{n}$. Hence, extracting the base-$3$ logarithm of all terms in the above inequality, we see that
\[
 m\;<\;1 + \frac{(\log_3\log_3{n}) -1}{\log_3{(\alpha^{-1})}} \;<\; -1+2\log_3\log_3{n} \ .
\]
This shows that $\nu_3(n)<2(1+\log_3\log_3{n})$, as claimed.
\end{proof}

\section{Ergodic $\mathbb{Z}$-actions and persistence in bases $q=3,4$}\label{sec:erg1}

In this section, more precisely in \S \ref{subsec:ergoz} below, we introduce a simple dynamical system -- generated by a single 
piecewise affine degree-one circle map -- which will turn out to be very useful in the study of the Sloane map for bases $q=3,4$. 
But first let us present some simple arithmetical facts.

\subsection{Periodicity of tails}\label{sec:tails}

Looking at the base-$3$ expansions of the successive powers of two (see Figure \ref{fig:PowersOf2Base3}), it is clear that for every $k\geq 1$ 
the rightmost $k$ digits of these powers exhibit a periodic behavior. Our goal here is to calculate the minimum period. 
This periodicity is more general: the last $k$ digits 
of the base-$q$ expansion of $p^n$ form a periodic sequence, for all $1<p<q$. This fact is a simple consequence of modular arithmetic, 
and we call it {\it tail periodicity\/}.

First, we need the following simple lemma. Recall that, if $q>1$ is an integer, then the set of all residues $r\in \{0,1,2,\ldots, q-1\}$ modulo $q$ that 
are relatively prime 
with $q$ form a {\it multiplicative group\/} (under multiplication modulo $q$), called the {\it group of units modulo $q$\/}, denoted $U_q$. 
This group $U_q$ has 
order $\phi(q)$, where $\phi$ is Euler's {\it totient function\/}. An integer $p$ is said to be a {\it primitive root modulo $q$\/} if the 
residue class of $p$ modulo $q$ has order $\phi(q)$ in $U_q$ (in particular, if such a primitive root exists, then the group $U_q$ must be cyclic).

\begin{lemma}\label{primroot1}
 Let $p$ and $q$ be integers such that $1<p<q$ and 
 $p$ is a primitive root modulo $q$. Then the residues $p^n (\!\!\mod q)$, $n=0,1,2,\ldots$, form a periodic sequence 
with minimum period equal to $\phi(q)$.
\end{lemma}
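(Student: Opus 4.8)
The plan is to reduce the claim about the periodicity of the sequence $n\mapsto p^n\pmod q$ entirely to the computation of the multiplicative order of $p$ in the group of units $U_q$, which by hypothesis equals $\phi(q)$. First I would observe that, since $p$ is a primitive root modulo $q$, its residue class lies in $U_q$, so $\gcd(p,q)=1$ and every power $p^n$ is again a unit. In particular the sequence $a_n:=p^n\bmod q$ takes all its values in $U_q\subset\{1,2,\ldots,q-1\}$ and never equals $0$, and the map $r\mapsto pr\pmod q$ is a bijection of $U_q$; this last point is what will let us conclude that the sequence is \emph{purely} periodic rather than merely eventually periodic.

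Next I would establish that $\phi(q)$ is a period. By the very definition of a primitive root, the order of the residue class of $p$ in $U_q$ is $\phi(q)$, so $p^{\phi(q)}\equiv 1\pmod q$ (this is of course also guaranteed by Euler's theorem). Multiplying this congruence by $p^n$ gives $a_{n+\phi(q)}=p^{n+\phi(q)}\equiv p^n=a_n\pmod q$ for every $n\ge 0$, so $\phi(q)$ is indeed a period of the sequence.

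Then I would prove minimality. Suppose $T\ge 1$ is any period, so that $a_{n+T}=a_n$ for all $n\ge 0$. Evaluating at $n=0$ yields $p^{T}\equiv a_0=1\pmod q$. Since the order of $p$ in $U_q$ is $\phi(q)$, every positive exponent $T$ with $p^T\equiv 1\pmod q$ must be a multiple of $\phi(q)$, and hence $T\ge\phi(q)$. Combined with the previous paragraph, this shows that the minimum period is exactly $\phi(q)$, as claimed.

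The only point that deserves care — and the closest thing here to an obstacle — is the passage from ``$T$ is a period of the whole sequence'' to ``$p^{T}\equiv 1\pmod q$''. This step is clean precisely because $p$ is a unit: multiplication by $p$ is invertible on $U_q$, so no initial transient can occur and a single evaluation at $n=0$ already forces $p^T\equiv 1$. Equivalently, one may phrase the entire argument group-theoretically by noting that the sequence $(a_n)$ is just the orbit of the identity under the cyclic subgroup $\langle p\rangle\subseteq U_q$ listed in order; since $p$ is a primitive root we have $\langle p\rangle=U_q$, a group of order $\phi(q)$, so the orbit returns to $1$ for the first time exactly at step $\phi(q)$.
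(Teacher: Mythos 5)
Your proof is correct and follows essentially the same route as the paper: both arguments reduce everything to the fact that $p$ has order $\phi(q)$ in $U_q$, using $p^{\phi(q)}\equiv 1 \pmod q$ for periodicity and the order of $p$ for minimality (the paper phrases minimality via the distinctness of $1,p,\ldots,p^{\phi(q)-1}$, you via divisibility of any period by the order — trivially equivalent). Your explicit remark that invertibility of multiplication by $p$ rules out an initial transient is a nice touch the paper leaves implicit.
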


\begin{proof}
 By hypothesis, $p\in U_q$ is a generator of the cyclic group $U_q$. Therefore, by Euler's theorem, $p^{\phi(q)}\equiv 1 (\!\!\mod q)$, and the elements
$1,p,p^2,\ldots,\break p^{\phi(q)-1}$ exhaust the elements of $U_q$. Since $p^{n+\phi(q)}\equiv p^n (\!\!\mod q)$ for all $n$, the sequence $p^n$ is indeed periodic, 
with minimum period $\phi(q)$. 
\end{proof}

As a consequence, we have the following. Given integers $k\geq 1$ and $n\geq 0$, let $r_n(k)$ denote the residue of $2^n$ modulo $3^k$.

\begin{lemma}\label{primroot2}
 For each $k\geq 1$, the sequence $r_n(k)$, $n=0,1,\ldots$, is periodic with minimum period $2\cdot 3^{k-1}$.
\end{lemma}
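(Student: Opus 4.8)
The plan is to reduce the statement to the fact that $2$ is a primitive root modulo $3^k$ for every $k\geq 1$, and then to invoke Lemma \ref{primroot1}. Since $U_{3^k}$ has order $\phi(3^k)=3^k-3^{k-1}=2\cdot 3^{k-1}$, once we know that $2$ generates this cyclic group, Lemma \ref{primroot1} applied with $p=2$ and $q=3^k$ tells us that the residues $r_n(k)=2^n\pmod{3^k}$ form a periodic sequence of minimal period $\phi(3^k)=2\cdot 3^{k-1}$, which is precisely the assertion. So everything comes down to proving that $2$ is a primitive root modulo $3^k$.

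To establish this, I would argue by induction on $k$, showing that the multiplicative order $\mathrm{ord}_{3^k}(2)$ equals $2\cdot 3^{k-1}$. The base case $k=1$ is immediate: $2\not\equiv 1\pmod 3$ while $2^2=4\equiv 1\pmod 3$, so $\mathrm{ord}_3(2)=2=\phi(3)$. For the inductive step, assume $\mathrm{ord}_{3^k}(2)=2\cdot 3^{k-1}$ and set $d=\mathrm{ord}_{3^{k+1}}(2)$. Any congruence $2^m\equiv 1\pmod{3^{k+1}}$ implies $2^m\equiv 1\pmod{3^k}$, so $\mathrm{ord}_{3^k}(2)\mid d$, giving $2\cdot 3^{k-1}\mid d$; and of course $d\mid\phi(3^{k+1})=2\cdot 3^k$. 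Hence $d$ is forced to equal either $2\cdot 3^{k-1}$ or $2\cdot 3^k$, and the whole induction hinges on ruling out the smaller value.

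The crux --- and the only step that is not purely formal --- is therefore to verify the non-congruence
\[
 2^{2\cdot 3^{k-1}}\;\not\equiv\;1\pmod{3^{k+1}}\ ,
\]
which is really a statement about the exact power of $3$ dividing $4^{3^{k-1}}-1$. I expect this to be the main obstacle, and I would handle it by tracking the $3$-adic valuation via the lifting-the-exponent lemma: because $3\mid 4-1$ but $9\nmid 4-1$, one has $v_3(4^m-1)=v_3(4-1)+v_3(m)=1+v_3(m)$, and taking $m=3^{k-1}$ yields $v_3(4^{3^{k-1}}-1)=k$. Thus $2^{2\cdot 3^{k-1}}-1$ is divisible by $3^k$ but not by $3^{k+1}$, which excludes $d=2\cdot 3^{k-1}$ and forces $d=2\cdot 3^k=\phi(3^{k+1})$, completing the induction. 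Should one prefer to avoid citing lifting-the-exponent, the same conclusion follows by induction on $k$ from the explicit formula $2^{2\cdot 3^{k-1}}=1+3^k a_k$ with $3\nmid a_k$: cubing turns this into $2^{2\cdot 3^k}=1+3^{k+1}a_{k+1}$ with $3\nmid a_{k+1}$, since in the binomial expansion only the leading correction term has $3$-adic valuation exactly $k+1$ while the remaining terms have valuation $\geq k+2$, the induction starting from $2^2=1+3\cdot 1$.
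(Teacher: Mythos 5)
Your proposal is correct and follows the same route as the paper: reduce to the fact that $2$ is a primitive root modulo $3^k$ and then apply Lemma \ref{primroot1} together with $\phi(3^k)=2\cdot 3^{k-1}$. The only difference is that the paper simply cites this primitive-root fact from LeVeque, whereas you supply a complete (and correct) proof of it via the $3$-adic valuation $v_3(4^{3^{k-1}}-1)=k$, which makes the argument self-contained.
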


\begin{proof}
 It is well-known that $2$ is a primitive root modulo $3^k$ for all $k\geq 1$ (see \cite[p.~81]{LeV}). Thus, the 
desired result follows at once from 
lemma \ref{primroot1} (with $p=2$ and $q=3^k$) and the fact that $\phi(3^k)=3^{k}-3^{k-1}=2\cdot 3^{k-1}$. 
\end{proof}

Note that for each $k$ there are exactly $2\cdot 3^{k-1}$ strings of length $k$ in the symbols $0,1,2$ 
which do not end in a zero. 
Call these strings {\it allowable\/}. Combining this counting, the above lemma and the fact that $r_n(k)$ also never ends in a zero 
(otherwise $2^n$ would be divisible by $3$), we see that in any full (minimum) period of $r_n(k)$ each allowable string appears exactly once. 
Moreover, since there are exactly $2^k$ allowable strings of length $k$ that show only the digits $1$ and $2$, the proportion of allowable strings 
in which at least one zero appears (relative to the total number of allowable sequences of length $k$) tends to $1$ as $k\to\infty$. 
From this we deduce that the set $A=\{n\in \mathbb{Z}^+:\;S_3(2^n)=0\}$ has {\it asymptotic density $1$\/}, in the sense of \S \ref{sec:density} below. 
This fact can be generalized to other bases (but the counting argument used here becomes a bit more involved). Rather than delve into such 
arithmetic methods, we shall use a completely different approach to prove a more general density result that holds true for all bases.

\subsection{Almost-periodicity of heads}

If, instead of looking at the final $k$ digits of the base-$3$ expansions of powers of two, we look at the first 
$k$ digits (for $k\geq 1$ fixed), the behavior of such {\it heads\/} is no longer periodic. 
It is rather {\it almost periodic\/}, as we will show (see Remark \ref{remalmost}).

\begin{figure}[t]
\begin{center}~
\hbox to \hsize{ \psfrag{0}[][][1]{$0$} \psfrag{q}[][][1]{$\frac{1}{q}$}
\psfrag{p}[][][1]{$\frac{1}{p}$} \psfrag{pq}[][][1]{$\frac{p}{q}$}
\psfrag{1}[][][1]{$1$}
\includegraphics[width=3.2in]{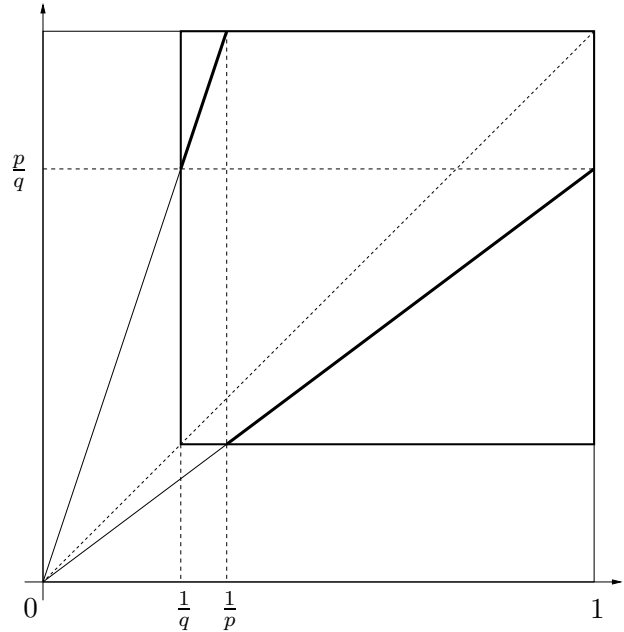}
   }
\end{center}
  \caption[slope]{\label{slope} A piecewise linear circle map built out of two linear maps, one expanding with slope $p$, 
the other contracting with slope $\frac{p}{q}$.}
\end{figure}

\subsection{An ergodic $\mathbb{Z}$-action}\label{subsec:ergoz}
In fact, let us present a more general result. We consider two integers $1<p<q$ and consider the sequence $p^n$, $n=0,1,2,\ldots$, written 
in base $q$, say
\begin{equation}\label{baseq}
 p^n\;=\; \left[d_{1,n}d_{2,n}\dots d_{k_n,n}\right]_q\;=\;\sum_{i=1}^{k_n} d_{i,n}q^{k_n-i}\ ,
\end{equation}
where $0\leq d_{i,n}\leq q-1$ and $d_{1,n}\neq 0$, and where the number of digits $k_n$ is given by $k_n=\lceil n\log_q{p}\rceil$. 
If we add a ``decimal point'' in front of the above expansion, we get the number
\begin{equation}\label{alternate}
 x_n\;=\;\frac{p^n}{q^{k_n}}\;=\;\left[0.d_{1,n}d_{2,n}\cdots d_{k_n,n}\right]_q\;\in\;\Delta_q \ ,
\end{equation}
where $\Delta_q=[\frac{1}{q},1]\subset \mathbb{R}$. Note that $x_0=(0.1)_q= 1/q$. Now, the crucial observation is that the sequence $(x_n)$ 
just defined is precisely the orbit of $x_0=1/q$ under the piecewise-affine map $T_{p,q}: \Delta_q\to \Delta_q$ given by
\begin{equation}\label{paffineT}
 T_{p,q}(x)\;=\; \left\{\begin{matrix}
                   px & , & \textrm{if}\ \;\dfrac{1}{q}\leq x < \dfrac{1}{p}\ ,\\
                   {} & {} & {}\\
                   \dfrac{p}{q}x & , & \textrm{if}\ \;\dfrac{1}{p}\leq x\leq 1\ .
                  \end{matrix} \right.
\end{equation}
This map is built out of two linear maps of the real line: one expanding (multiplication by $p>1$), the other contracting (multiplication 
by $p/q<1$); see Figure \ref{slope}. Through the identification of the endpoints of $\Delta_q$, the map $T_{p,q}$ becomes a (piecewise affine) homeomorphism 
of the circle. The reader can easily check that $x_n = T_{p,q}^n(x_0)$, for all $n\geq 0$.
{\footnote{There are some analogies between Sloane's question, at least for base 3, and the Collatz 
conjecture as discussed in Terence Tao's blog entry \cite{T}. In both cases, one represents the problem 
by a question about a map  on a 
compact abelian group (here the circle, while one uses the dyadics for the Collatz conjecture). 
Also, in both cases one 
can formulate a main aspect of the problem in terms of powers of 2 in base 3: here by expressing that 
$2^n$ has at least one zero in its base 3 expansion for all $n>15$, and as explained by Tao in said blog 
for the Collatz case.  
}}

Now we have the following result.

\begin{thm}\label{proptopconj}
The map $T_{p,q}: \Delta_q\to \Delta_q$ is topologically conjugate to the rotation by $\alpha=\log_q{p}$. 
The conjugacy is Lipschitz, in fact 
differentiable except at
a single point. Moreover, $T_{p,q}$ has an absolutely continuous invariant measure given by
\[
 d\mu(x)\;=\; \frac{dx}{x\log{q}} \ .
\] 
\end{thm}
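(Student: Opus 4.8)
The plan is to produce the conjugacy explicitly through a logarithmic change of coordinates, and then obtain the invariant measure as the pullback of Lebesgue measure under that conjugacy (with a direct transfer-operator check as confirmation). The guiding idea is that multiplying $x$ by a constant $c$ becomes \emph{adding} $\log_q c$ once we pass to a $\log_q$-coordinate, so that the two affine branches of $T_{p,q}$ collapse into a single rigid rotation. Concretely, I would introduce the map $h:\Delta_q\to\mathbb{R}/\mathbb{Z}$ defined by $h(x)=1+\log_q x$. Since $h(1/q)=0$ and $h(1)=1$, and $h$ is a strictly increasing continuous bijection of $[1/q,1]$ onto $[0,1]$, after identifying the endpoints of $\Delta_q$ with those of $[0,1]$ it descends to a homeomorphism of circles.

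Next I would verify the conjugacy relation $h\circ T_{p,q}=R_\alpha\circ h$ branch by branch, where $R_\alpha(t)=t+\alpha \pmod 1$ and $\alpha=\log_q p\in(0,1)$. On the expanding branch $x\mapsto px$ one computes $h(px)=1+\log_q p+\log_q x=\alpha+h(x)$, while on the contracting branch $x\mapsto (p/q)x$ one gets $h((p/q)x)=1+\log_q p-1+\log_q x=\alpha+h(x)-1\equiv\alpha+h(x)\pmod 1$. Thus both branches realize the \emph{same} rotation $R_\alpha$, so $T_{p,q}$ is topologically conjugate to $R_\alpha$. (I would note in passing that the two branch formulas agree with the circle identification at the break point $x=1/p$ and at the glued endpoints, which is precisely why $T_{p,q}$ is a circle homeomorphism in the first place.)

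For the regularity, on the interior $(1/q,1)$ the conjugacy $h$ is real-analytic, with
\[
 h'(x)=\frac{1}{x\log q}\,,
\]
which is monotone and bounded between $1/\log q$ (at $x=1$) and $q/\log q$ (at $x=1/q$); hence $h$ is Lipschitz, and since $h'$ stays bounded away from $0$ the inverse is Lipschitz too. The \emph{only} place where differentiability can fail is the single point of $\Delta_q$ at which $1/q$ and $1$ are glued together: there the two one-sided derivatives are $h'(1^-)=1/\log q$ and $h'((1/q)^+)=q/\log q$, and these disagree because $q>1$, producing a corner. This is exactly the asserted ``Lipschitz, differentiable except at a single point''.

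Finally, for the invariant measure the cleanest route is to pull the $R_\alpha$-invariant Lebesgue measure $dt$ on the circle back through $h$: from $dt=h'(x)\,dx=dx/(x\log q)$ we read off that $d\mu(x)=dx/(x\log q)$ is $T_{p,q}$-invariant, and $\int_{1/q}^{1}dx/(x\log q)=1$ confirms it is a probability measure. As an independent verification I would check the transfer-operator identity $\rho(y)=\rho(T_{p,q}^{-1}y)/|T_{p,q}'(T_{p,q}^{-1}y)|$ for $\rho(x)=1/(x\log q)$ on each of the two preimage branches ($y=y/p$ with multiplier $p$ for $y\in(p/q,1)$, and $x=qy/p$ with multiplier $p/q$ for $y\in(1/q,p/q)$); both cases reduce algebraically to $\rho(y)$. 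I do not anticipate a genuine obstacle, since the argument is essentially logarithmic bookkeeping; the one point demanding care is the non-differentiability claim, where one must be explicit that the sole bad point is the gluing point of the circle and actually compute the mismatched one-sided derivatives, rather than merely invoking smoothness of $\log_q$ on the interior and overlooking the identification.
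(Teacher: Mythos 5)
Your proof is correct and follows essentially the same route as the paper: a logarithmic change of coordinates verified branch by branch, with the invariant measure obtained by transporting Lebesgue measure through the conjugacy. The only cosmetic difference is that your $h(x)=1+\log_q x$ is the inverse of the paper's conjugacy $t\mapsto q^{t-1}$ (so the cases are indexed by $x$ rather than $t$), and your transfer-operator verification is an extra check the paper omits.
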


\begin{proof}
 Let us write $T=T_{p,q}$ in this proof. First note that 
\begin{align*}
 T^n(x_0)\;&=\;\frac{p^n}{q^{\lceil n\alpha\rceil}}\;=\;\frac{1}{q^{1-\{n\alpha\}}}\left(\frac{p}{q^\alpha}\right)^n \\
           &=\; q^{\{n\alpha\}-1} \ ,
\end{align*}
where we have used that $pq^{-\alpha}=1$. Hence, defining $h: [0,1]\to \Delta_q$ by 
\begin{equation}\label{hconj}
 h(t)\;=\;q^{t-1}\;=\; \frac{1}{q}\exp\{ t\log{q}\} \ ,
\end{equation}
we see that $h(\{n\alpha\})=T^n(x_0)$. In other words, $h$ maps the orbit of $0$ under the rotation $R_\alpha: t\mapsto t+\alpha\, (\!\!\mod 1)$ 
onto the orbit of $x_0$ under $T$. This suggests that $h$ is a conjugacy between $R_\alpha$ and $T$. 

To see this, let $t\in [0,1]$, and note that there are two possibilities.
\begin{enumerate}
 \item[(i)] We have $0\leq t< 1-\alpha$: in this case, we have $R_\alpha(t)=t+\alpha$, and therefore
\begin{align*}
 h\circ R_\alpha(t)\;&=\; q^{t+\alpha -1}\;=\;q^{\alpha}h(t) \\
                     &=\; q^{\log_q{p}}h(t)\;=\; p h(t)\;=\; T\circ h(t)\ .
\end{align*}
 \item[(ii)] We have $1-\alpha\leq t\leq 1$: in this case, $R_\alpha(t)=t+\alpha -1$, and therefore
\begin{align*}
 h\circ R_\alpha(t)\;&=\; q^{t+\alpha -2}\;=\;q^{\alpha -1}h(t) \\
                     &=\; q^{\log_q{p}-1}h(t)\;=\; \frac{p}{q} h(t)\;=\; T\circ h(t)\ .
\end{align*}
\end{enumerate}
Thus, it follows that $h\circ R_\alpha=T\circ h$. The map $h$ is a homeomorphism, as is clear from \eqref{hconj}. In fact $h$ is an 
analytic diffeomorphism in the open interval $(0,1)$. Upon identification of the endpoints $0$ and $1$, $h$ becomes a circle homeomorphism which is 
differentiable at all points except one, where a break in the derivative occurs. 

The absolutely continuous invariant measure $\mu$ for $T$ in $\Delta_q$ can be obtained very simply as the push-forward of Lebesgue measure $\lambda$ 
in $[0,1]$ via $h$. To wit, if $E\subseteq \Delta_q$ is Borel measurable, we have
\[
 \mu(E)\;=\;\lambda(h^{-1}(E))\;=\; \int_{h^{-1}(E)} dt\;=\; \int_{E} (h^{-1})'(x)\,dx\ .
\]
Since from \eqref{hconj} we have
\[
 h^{-1}(x)\;=\; \frac{\log{qx}}{\log{q}}\ ,
\]
it follows that
\[
 d\mu(x)\;=\;(h^{-1})'(x)\,dx\;=\; \frac{1}{x\log{q}}\,dx\ ,
\]
as was to be proved.
\end{proof}

\begin{rem}\label{remalmost}
Note that we do {\it not\/} assume that $\alpha=\log_q{p}$ is irrational in the above theorem.
When $\alpha$ is irrational, the orbit of any point under $T_{p,q}$ is almost periodic. Hence we have the promised almost periodicity of tails. 
For a more general result concerning piecewise affine homeomorphisms of the circle, see \cite{Li}.
\end{rem}

\begin{rem}\label{largerp}
The above theorem also holds true when $p>q$, provided we define $T_{p,q}$ appropriately in such cases. 
This can be done as follows. Let $k\geq 1$ be such that $q^k<p<q^{k+1}$, and define 
\begin{equation}\label{paffineT2}
 T_{p,q}(x)\;=\; \left\{\begin{matrix}
                   \dfrac{p}{q^k}x & , & \textrm{if}\ \;\dfrac{1}{q}\leq x < \dfrac{q^k}{p}\ ,\\
                   {} & {} & {}\\
                   \dfrac{p}{q^{k+1}}x & , & \textrm{if}\ \;\dfrac{q^k}{p}\leq x\leq 1\ .
                  \end{matrix} \right.
\end{equation}
Then, just as before, $T_{p,q}: \Delta_q\to \Delta_q$ is a piecewise affine degree-one 
circle map with rotation number $\alpha=\{\log_q{p}\}$, 
and this circle map preserves the same measure $\mu$ given in Theorem \ref{proptopconj} ({\it cf.\/} \S \ref{sec:further}). 
\end{rem}

\begin{rem}
In \cite{Bo}, Boshernitzan gave an example of a piecewise affine homeomorphism of the unit circle 
$\mathbb{R}/\mathbb{Z}$ (seen as the interval $[0,1]$ with the endpoints identified) 
which preserves the rationals and has only dense orbits. We remark that our $T_{p,q}$'s yield 
a {\it plethora of such examples\/}. Indeed, defining $L:[0,1]\to \Delta_q$ by 
$L(t)=\frac{1}{q}+ \left(1-\frac{1}{q}\right)t$, we see that each conjugated map $L^{-1}\circ T_{p,q}\circ L$ is an
example of the type devised by Boshernitzan, provided its rotation number $\alpha=\{\log_q{p}\}$ is irrational. 
\end{rem}

Let us now present two corollaries of Theorem \ref{proptopconj}.
\subsection{A density result}\label{sec:density}
In order to state and prove the first corollary, we recall that the {\it lower density\/} 
of a subset $A\subseteq \mathbb{Z}^+$, is given by
\[
 D^{-}(A)\;=\;\lim\inf_{n\to \infty} \frac{1}{n}\#(A\cap [1,n])\ .
\]
One defines the {\it upper density\/} $D^{+}(A)$ of $A$ in similar fashion, replacing the $\lim\inf$ by $\lim\sup$. 
If the upper and lower densities are equal, then we say that $A$ has {\it asymptotic density\/} $D(A)=D^{+}(A)=D^{-}(A)$. 
Recall also that a continuous self-map $T$ of a compact metric space $X$ is {\it uniquely ergodic\/}, {\it i.e.\/} 
has a unique invariant Borel probability measure $\mu$, if and only if its Birkhoff time averages 
$\frac{1}{n}\sum_{i=0}^n f\circ T^i$ converge uniformly to the space average $\int_X f\,d\mu$,   
for every $f\in C^0(X)$ (see for instance \cite[p. 160]{W}).  

\newpage

\begin{cor}\label{rankonedensity}
 If $1<p<q$ are such that $\log_q{p}$ is irrational, then the set $A=\{n\in \mathbb{Z}^+\,:\; S_q(p^n)=0\,\}$ has asymptotic density equal to $1$.
\end{cor}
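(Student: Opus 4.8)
The plan is to deduce the result from the \emph{unique} ergodicity of $T_{p,q}$, which is exactly what Theorem \ref{proptopconj} provides in disguise. Since $\alpha=\log_q p$ is assumed irrational, the rotation $R_\alpha$ is uniquely ergodic, its only invariant Borel probability measure being Lebesgue measure on $[0,1]$; as unique ergodicity is preserved under topological conjugacy, $T_{p,q}$ is uniquely ergodic, and its unique invariant measure is the $\mu$ computed in Theorem \ref{proptopconj}. The point of having \emph{unique} ergodicity, rather than mere ergodicity, is decisive: it guarantees that the orbit of \emph{every} point equidistributes with respect to $\mu$, and in particular the specific orbit $(x_n)$ of $x_0=1/q$ that encodes the leading digits of $p^n$ does so. Thus we never have to argue that $x_0$ is a $\mu$-typical point.

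Next I would translate the arithmetic condition defining $A$ into a dynamical one. For a fixed integer $m\geq 1$, let $B_m\subseteq\Delta_q$ be the set of $x$ whose first $m$ base-$q$ digits after the point are all nonzero. The key elementary observation is that $S_q(p^n)\neq 0$ forces \emph{every} digit of $p^n$ to be nonzero, and hence forces $x_n$ to lie in $B_m$ as soon as $k_n\geq m$ (which holds for all $n$ beyond some $n_0(m)$). Therefore, discarding the finitely many indices $n<n_0(m)$,
\[
 A^c=\{n:\;S_q(p^n)\neq 0\}\;\subseteq\;\{\,n:\;T_{p,q}^n(x_0)\in B_m\,\}\ .
\]
Moreover $B_m$ is a disjoint union of $(q-1)^m$ base-$q$ subintervals of $\Delta_q$, so its Lebesgue measure is $\mathrm{Leb}(B_m)=((q-1)/q)^m$; since $\mu$ has bounded density $1/(x\log q)$ on $\Delta_q$, it follows that $\mu(B_m)\to 0$ as $m\to\infty$.

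Finally I would combine the two ingredients. Because $B_m$ is a finite union of intervals, its boundary is a finite set and so $\mu(\partial B_m)=0$; approximating $\mathbf{1}_{B_m}$ from above and below by continuous functions whose $\mu$-integrals are arbitrarily close to $\mu(B_m)$, and inserting the uniform convergence of Birkhoff averages that characterizes unique ergodicity, one obtains that $\{n:\;T_{p,q}^n(x_0)\in B_m\}$ has asymptotic density \emph{exactly} $\mu(B_m)$. Together with the displayed inclusion this gives $D^{+}(A^c)\leq \mu(B_m)$ for every $m$, whence $D^{+}(A^c)=0$ and $D^{-}(A)=1-D^{+}(A^c)=1$; since densities never exceed $1$, this forces $D(A)=1$, as claimed. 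I expect the only genuinely delicate step to be this last passage from continuous test functions to the discontinuous indicator $\mathbf{1}_{B_m}$ --- that is, upgrading unique ergodicity to genuine equidistribution on the half-open intervals defining $B_m$ --- which is precisely where the remark $\mu(\partial B_m)=0$ is needed and where a little care is required; everything else is bookkeeping with the digit expansion and the density of $\mu$.
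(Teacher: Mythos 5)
Your proof is correct and follows essentially the same route as the paper's: unique ergodicity of $T_{p,q}$ (inherited from the irrational rotation through the conjugacy of Theorem \ref{proptopconj}), one-sided approximation of the indicator of a finite union of digit-cylinder intervals by continuous functions, and passage to the limit in the cylinder depth. The only cosmetic difference is that you bound the upper density of the complement of $A$ using the zero-free cylinders $B_m$, whereas the paper bounds the lower density of $A$ directly using the complementary sets (points with at least one zero among the first $j$ digits); the two formulations are equivalent.
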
 
 
\begin{proof}
Let $T=T_{p,q}$ and $\mu$ be as in Theorem \ref{proptopconj}, and let $x_n=T^n(x_0)$ be our special orbit as before.  
Since $T$ is Lipschitz-conjugate 
to an irrational rotation, it is uniquely ergodic. 
For each $j\geq 1$, let $B_j\subseteq \Delta_q$ be the set of all points $x$ whose base-$q$ expansion 
has at least one digit $0$ among its first $j$ digits. Then $B_j$ is a finite union of 
intervals, and its Lebesgue measure is easily seen to be
\[
 \lambda(B_j)\;=\; |\Delta_q|\left[1-\left(1-\frac{1}{q}\right)^{j-1}\right]
\]
Thus, $\lambda(B_j)\nearrow |\Delta_q|$, and therefore $\mu(B_j)\nearrow 1$, as $j\to\infty$. 
Note that if $x_n\in B_j$ for some $n$ with $k_n\geq j$, then $S_q(p^n)=0$, {\it i.e.\/} $n\in A$. 
Since $B_j$ is a {\it finite\/} union of intervals and $\mu$ is a regular Borel measure, we can find  
a continuous function $f_j:\Delta_q\to \mathbb{R}$  such that $0\leq f_j\leq \chi_{B_j}$ everywhere and
\begin{equation}\label{uniq1} 
 \int_{\Delta_q}f_j\,d\mu \;\geq\; \mu(B_j)-\frac{1}{2j}\ .
\end{equation}
In particular, for all $N\geq 1$ we have
\begin{equation}\label{uniq2}
 \frac{1}{N}\sum_{n=0}^{N-1} \chi_{B_j}\circ T^n(x_0) \;\geq\; \frac{1}{N}\sum_{n=0}^{N-1} f_j\circ T^n(x_0)\ .
\end{equation}
Using \eqref{uniq1} and the fact that $T$ is uniquely ergodic, we deduce that
\begin{equation}\label{uniq3}
 \frac{1}{N}\sum_{n=0}^{N-1} f_j\circ T^n(x_0)\;\geq\; \int_{\Delta_q} f_j\,d\mu -\frac{1}{2j} \;\geq\; \mu(B_j)-\frac{1}{j}\ ,
\end{equation}
provided $N$ is sufficiently large. Hence, combining \eqref{uniq2} and \eqref{uniq3}, we get
\begin{equation}
 \frac{1}{N}\sum_{n=0}^{N-1} \chi_{B_j}(x_n)\;\geq\;\mu(B_j)-\frac{1}{j} \ ,
\end{equation}
for all sufficiently large $N$. 
Writing $A_j=\{n\in \mathbb{Z}^+:\,x_n\in B_j\}$, we have just proved that 
\[
 D^{-}(A_j)\;\geq\; \mu(B_j)-\frac{1}{j}
\]
Since all but finitely many elements of $A_j$ belong to $A$, it follows that
$D^{-}(A)\geq \mu(B_j)-\frac{1}{j}$ as well, for every $j$. Letting $j\to\infty$, we deduce that
$D(A)=D^{-}(A)=1$, as desired. 
\end{proof}

\subsection{A formula for the digits of $p^n$ in base $q$}
Another consequence of Theorem \ref{proptopconj} is the following explicit formula for the $j$-th 
digit of $p^n$ written in base $q$. 
\begin{cor}
 If $d_{j,n}$ denotes, as in \eqref{baseq}, the $j$-th digit from left to right in the base $q$ expansion of $p^n$, then
\begin{equation}\label{digitclosedform}
 d_{j,n}\;=\; \left\lfloor q\left\{q^{j+\{n\alpha\}-2}\right\} \right\rfloor \ ,\ \textrm{for all}\ j=1,2,\ldots,k_n\ ,
\end{equation}
where, as before, $\alpha=\log_q{p}$ and $k_n$ is the number of digits in that expansion.
\end{cor}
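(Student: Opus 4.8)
The plan is to reduce the claimed formula to the single identity $x_n=q^{\{n\alpha\}-1}$ that already appears in the proof of Theorem \ref{proptopconj}, combined with the standard rule for reading off a base-$q$ digit from a real number in $[0,1)$.

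First I would record the elementary digit-extraction identity: if $x\in[0,1)$ has base-$q$ expansion $x=[0.e_1e_2e_3\cdots]_q=\sum_{i\geq 1}e_iq^{-i}$, then for every $j\geq 1$
\[
 e_j\;=\;\left\lfloor q\left\{q^{\,j-1}x\right\}\right\rfloor\ .
\]
The verification is immediate: multiplying by $q^{j-1}$ shifts the expansion so that the digits $e_1,\dots,e_{j-1}$ contribute to the integer part while $e_j,e_{j+1},\dots$ contribute to the fractional part, whence $\{q^{j-1}x\}=e_jq^{-1}+e_{j+1}q^{-2}+\cdots$, and multiplying by $q$ and taking the floor isolates $e_j$.

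Next I would invoke the recollection from \eqref{alternate} that $x_n=p^n/q^{k_n}=[0.d_{1,n}d_{2,n}\cdots d_{k_n,n}]_q$, which is a terminating base-$q$ fraction with exactly $k_n$ digits, so that the $d_{j,n}$ for $1\leq j\leq k_n$ are precisely the digits $e_j$ of $x_n$ in the sense above. Applying the extraction identity to $x=x_n$ gives $d_{j,n}=\lfloor q\{q^{j-1}x_n\}\rfloor$. At this point the only thing left is to substitute the closed form $x_n=q^{\{n\alpha\}-1}$ derived in the proof of Theorem \ref{proptopconj}; then $q^{j-1}x_n=q^{j-1}q^{\{n\alpha\}-1}=q^{\,j+\{n\alpha\}-2}$, and \eqref{digitclosedform} follows at once.

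The one point requiring care---and the only conceivable obstacle---is the behaviour of the extraction identity at the endpoints of the representation, since the floor and fractional-part functions are discontinuous exactly where $q^{j-1}x$ is an integer. For the digit positions past the last nonzero one the fractional part vanishes and the formula correctly returns $0$; and because $x_n$ is a genuine terminating expansion (not one of the ambiguous representations ending in all $(q-1)$'s), no boundary coincidence can spoil the identity in the range $1\leq j\leq k_n$. I would therefore spend one sentence confirming that the standard, terminating expansion is the one produced by $\lfloor q\{q^{j-1}x\}\rfloor$, after which the derivation is purely mechanical.
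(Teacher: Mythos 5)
Your proposal is correct and follows essentially the same route as the paper: the paper likewise writes $d_{j,n}=\left\lfloor q\left\{q^{j-1}x_n\right\}\right\rfloor$ and then substitutes the identity $x_n=h(\{n\alpha\})=q^{\{n\alpha\}-1}$ from Theorem \ref{proptopconj}. Your extra care about the terminating-expansion boundary cases is a harmless elaboration of a step the paper treats as immediate.
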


\begin{proof}
 Again, we let $T=T_{p,q}$. Since $d_{j,n}$ is also the $j$-th digit of $x_n=T^n(x_0)$ after the decimal point, we can certainly write
\[
 d_{j,n}\;=\; \left\lfloor q\left\{q^{j-1}x_n\right\} \right\rfloor \ .
\]
But by Theorem \ref{proptopconj}, we have
\[
 x_n\;=\;T^n(x_0)\;=\; h\circ R_{\alpha}^n(0)\;=\;h(\{n\alpha\})\;=\;q^{\{n\alpha\} -1} \ .
\]
This immediately implies formula \eqref{digitclosedform}. 
\end{proof}

\begin{rem}
Although it may seem a bit surprising that we have such an explicit formula for the digits $d_{j,n}$, the formula 
is in practice rather useless for large values of $n$: roughly speaking, evaluating $d_{j,n}$ depends on knowing at
least the first $n$ digits of $\alpha=\log_q{p}$ after the decimal point. 
\end{rem}

At least for bases $3$, $4$, $5$, and $10$, we have strong computational evidence suggesting the validity of the following conjecture, which 
(for base $3$) is certainly stronger than conjecture \ref{base3persist}.

\begin{con}\label{equaldigits}
If $q$ is not a power of $p$, then for each $d=0,1,\ldots, q-1$, we have 
\[
 \lim_{n\to\infty} \frac{1}{k_n} \,\# \left\{ 1\leq j \leq k_n:\ d_{j,n}=d\,\right\}     
\;=\; \frac{1}{q} \ .
\]
(As before, $k_n=\lceil n\log_q{p}\rceil$ is the number of digits of $p^n$ in base $q$.)
\end{con}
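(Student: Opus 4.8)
The plan is to recast the digit–counting quantity as a Birkhoff average for the base-$q$ shift and then attack the resulting equidistribution problem by Weyl's method, isolating the genuinely hard input at the very end. Write $\sigma(x)=\{qx\}$ for the base-$q$ shift on $[0,1)$ and $g=\chi_{[d/q,(d+1)/q)}$. By the digit formula \eqref{digitclosedform}, together with $x_n=T_{p,q}^n(x_0)=q^{\{n\alpha\}-1}$, we have $d_{j,n}=d$ exactly when $\sigma^{j-1}(x_n)\in[d/q,(d+1)/q)$, so that
\[
 \frac{1}{k_n}\#\{1\le j\le k_n:\ d_{j,n}=d\}\;=\;\frac{1}{k_n}\sum_{j=0}^{k_n-1}g\bigl(\sigma^j(x_n)\bigr)\ .
\]
Since $\int_0^1 g\,d\lambda=1/q$ for every $d=0,\dots,q-1$, the conjecture is \emph{equivalent} to the assertion that the finite orbit segments $\{\sigma^j(x_n):0\le j<k_n\}$ equidistribute in $[0,1)$ with respect to Lebesgue measure as $n\to\infty$. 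Equivalently, one must show that the integers $p^n$ become \emph{normal to base $q$}, with the crucial subtlety that the number of digits inspected, $k_n$, grows simultaneously with the point $x_n$ being inspected.

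Second, I would apply Weyl's criterion. Equidistribution of the above segments is equivalent to
\[
 S_n(h)\;:=\;\frac{1}{k_n}\sum_{j=0}^{k_n-1}e\bigl(h\,q^j x_n\bigr)\;\longrightarrow\;0\qquad(n\to\infty)
\]
for every nonzero integer $h$, where $e(t)=e^{2\pi i t}$ and we used $e(h\{t\})=e(ht)$. Substituting $x_n=q^{\theta_n-1}$ with $\theta_n=\{n\alpha\}$ rewrites this as a \emph{lacunary} exponential sum $S_n(h)=\frac{1}{k_n}\sum_{j<k_n}e(c_n\,q^j)$ with $c_n=h\,q^{\theta_n-1}$. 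Thus the entire conjecture is reduced to exhibiting cancellation in geometric-progression exponential sums whose frequency $c_n$ is pinned down by the rotation orbit $\{n\alpha\}$.

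Third, the most promising \emph{rigorous} route is a second-moment estimate that exploits the one piece of hard information we do possess, namely the unique ergodicity of $T_{p,q}$ (Theorem \ref{proptopconj}), equivalently the equidistribution of $\{n\alpha\}$ and hence the $\log$-equidistribution of $c_n$. I would try to bound the mean square $\frac{1}{M}\sum_{n\le M}|S_n(h)|^2$: the diagonal $j=j'$ contributes $\frac1M\sum_{n\le M}k_n^{-1}=O(M^{-1}\log M)$, while in the off-diagonal terms the inner phase is $e\bigl(h\,q^{\theta_n-1}(q^j-q^{j'})\bigr)$, and summing over $n$ against the equidistributing $\theta_n$ produces oscillatory integrals $\int_{1/q}^1 e(\beta t)\,d\mu(t)$ with $\beta=h(q^j-q^{j'})$; since $\mu$ has a smooth density these decay like $O(|\beta|^{-1})=O(q^{-\max(j,j')})$, and the resulting double sum converges geometrically. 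If this can be pushed through uniformly in the dyadic ranges where $k_n$ is roughly constant, it would yield $\frac1M\sum_{n\le M}|S_n(h)|^2\to 0$, and hence the full conclusion of the conjecture \emph{along a density-one set of $n$}. In the same spirit one may package the two dynamics — the rotation $T_{p,q}$ advancing $n$ and the $\times q$ map $\sigma$ advancing $j$ — into a skew product over $\Delta_q$, so that the sought statement becomes a joint equidistribution along the staircase $0\le j<k_n$.

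The main obstacle is the upgrade from "density-one $n$" to "\emph{all} $n$," which the conjecture demands. In the skew-product picture the head coordinate is governed by the zero-entropy, uniquely ergodic map $T_{p,q}$, but the digit-reading coordinate is governed by $\sigma$, which is far from uniquely ergodic: it carries a wealth of invariant measures (supported on non-normal numbers) against which an individual orbit $x_n$ could in principle concentrate. Ruling this out for every single $n$ is precisely a measure-rigidity problem of Furstenberg $\times p,\times q$ type: one would need an input forcing any limiting distribution of the segments $\{\sigma^j(x_n)\}_{j<k_n}$ to be Lebesgue, e.g.\ a positive-entropy hypothesis feeding into a Rudolph–Johnson-style theorem — but verifying such positivity for the specific arithmetic orbit generated by $p^n$ is exactly the point at which all known methods stall, and is why the statement is posed as a conjecture rather than a theorem.
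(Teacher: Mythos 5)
The statement you are asked about is Conjecture~\ref{equaldigits}, and the paper does not prove it: it offers only computational evidence and a heuristic argument (the inhomogeneous Markov chain model for the carry process in \S\ref{sub:Heuristic1}, together with the rigorous but conditional ``block protection'' estimate of Proposition~\ref{protectblock}). Your proposal is likewise not a proof, and to your credit you say so; what you give is a correct \emph{reformulation}. The reduction in your first two steps is sound: writing $\sigma(x)=\{qx\}$ and $g=\chi_{[d/q,(d+1)/q)}$, the digit count is indeed the Birkhoff sum $\frac{1}{k_n}\sum_{j<k_n}g(\sigma^j(x_n))$ with $x_n=q^{\{n\alpha\}-1}$, and Weyl's criterion converts the conjecture into cancellation in the lacunary sums $\frac{1}{k_n}\sum_{j<k_n}e(c_nq^j)$. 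This is essentially the statement that $p^n$ is ``asymptotically normal'' to base $q$, and it is well understood to be out of reach.

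The genuine gap sits in your third step, and it is worth naming more precisely than your hedge (``if this can be pushed through uniformly'') suggests. In the second-moment computation the off-diagonal phases are $e\bigl(h(q^j-q^{j'})q^{\theta_n-1}\bigr)$ with $\beta=h(q^j-q^{j'})$ as large as $q^{k_n}\approx p^n$. Unique ergodicity of the rotation gives convergence of $\frac1M\sum_{n\le M}f(\theta_n)$ to $\int f\,dt$ for each \emph{fixed} continuous $f$, but here $f$ oscillates at scale $|\beta|^{-1}$, which is exponentially small in $M$; to exchange the sum over $n$ with the oscillatory integral you need the discrepancy of $\{n\alpha\}_{n\le M}$ to beat $|\beta|^{-1}$, i.e.\ effective equidistribution at exponentially fine scales, which in turn requires Diophantine information about $\alpha=\log_q p$ that nobody possesses. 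So even the density-one conclusion does not follow from the stated inputs. The final upgrade from density-one $n$ to all $n$ is, as you say, a Furstenberg $\times p,\times q$ rigidity problem and is the second open point. By contrast, the paper sidesteps exponential sums entirely: it models the carryover as random, derives an inhomogeneous Markov chain whose strong ergodicity would force equidistribution, and isolates the unproved step as the persistence of approximate $k$-balancedness from row to row (\S\ref{sub:Heuristic3}); its only rigorous general results in this direction are the much weaker density statements of Corollary~\ref{rankonedensity} and Theorem~\ref{rankkdensity}. Your reformulation is a legitimate and complementary way to see why the conjecture is hard, but it should not be presented as a proof strategy that merely needs technical polishing.
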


For computational evidence and heuristic arguments supporting this conjecture, see \S \ref{sec:compevid}. 
If $q=p^k$ for some $k\geq 1$, then for every $n\geq 1$ the base $q$ expansion 
of $p^{nk}$ consists of the digit $1$ followed by $n$ zeros. This shows that the hypothesis that $q$ is not a power of $p$ is indeed necessary. 

\section{Ergodic $\mathbb{Z}^d$-actions and persistence in base $q>4$}\label{sec:erg2}

We wish to generalize the results presented in \S \ref{sec:erg1} to the cases when the base $q$ is greater than $4$.
The relevant dynamical system here is no longer the (semi-)group given by a single piecewise affine degree-one circle map, but rather 
the (semi-)group generated by several such maps. 

\subsection{Abelian actions for the Sloane map}\label{sec:abelact}
Let $2=p_1<p_2<\cdots<p_m\leq q-1$ be the list of all primes smaller than $q$. If $n\in \mathbb{Z}^+$ is such that 
$S_q(n)\neq 0$, then we can certainly write
\begin{equation}\label{sqprod}
 S_q(n)\;=\; \prod_{i=1}^m p_i^{n_i} \ ,
\end{equation}
where $(n_1,n_2,\cdots,n_m)\in \mathbb{Z}_{+}^m = \left(\mathbb{Z}^+\right)^m\subset \mathbb{Z}^m$. 
We can treat the base-$q$ 
expansion of the right-hand side of \eqref{sqprod} pretty much in the same way as we treated the digits of 
$p^n$ in base $q$ in \S \ref{subsec:ergoz}. Keeping the notation introduced in \S \ref{subsec:ergoz}, 
let $\Delta_q=[q^{-1},1]\equiv \mathbb{S}^1$, and consider the piecewise affine homeomorphisms 
$T_{p_i}=T_{p_i,q}: \Delta_q\to \Delta_q$ ($i=1,2,\ldots,m$) defined taking $p=p_i$ in \eqref{paffineT}. 
We denote by $PL^+(\Delta)$ the group of all piecewise affine homeomorphisms of the interval 
$\Delta\subseteq \mathbb{R}$.

\begin{lemma}
 The group $G_q\subset PL^+(\Delta_q)$ generated by $\{T_{p_i}:\,i=1,2,\ldots,m\}$ is abelian. 
\end{lemma}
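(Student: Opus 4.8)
The plan is to exploit the fact that the conjugacy $h$ constructed in Theorem \ref{proptopconj} depends only on the base $q$, and not at all on the multiplier $p$. First I would record this explicitly: for each prime $p_i$ (note $p_i\leq q-1<q$, so \eqref{paffineT} and Theorem \ref{proptopconj} apply directly, with no need for the convention of Remark \ref{largerp}), set $\alpha_i=\log_q p_i$. Then Theorem \ref{proptopconj} gives the intertwining relation $h\circ R_{\alpha_i}=T_{p_i}\circ h$, where $h:[0,1]\to\Delta_q$ is the single homeomorphism $h(t)=q^{t-1}$ and $R_{\alpha_i}$ is rotation by $\alpha_i$. The crucial observation is that one and the same $h$ works simultaneously for every $i$, precisely because its definition involves only $q$. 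Equivalently, $h^{-1}\circ T_{p_i}\circ h=R_{\alpha_i}$ for all $i=1,2,\ldots,m$.

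Next I would phrase this as a statement about conjugation as a group automorphism. Let $\Phi:\mathrm{Homeo}(\mathbb{S}^1)\to\mathrm{Homeo}(\mathbb{S}^1)$ denote conjugation by $h$, that is, $\Phi(g)=h\circ g\circ h^{-1}$; this is an automorphism of the group of circle homeomorphisms (here $\mathbb{S}^1\equiv\Delta_q$, with the endpoints of $\Delta_q$ identified). By the previous paragraph $\Phi(R_{\alpha_i})=T_{p_i}$ for every $i$, so $\Phi$ carries the subgroup $\langle R_{\alpha_1},\ldots,R_{\alpha_m}\rangle$ isomorphically onto $G_q=\langle T_{p_1},\ldots,T_{p_m}\rangle$.

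Finally, since every rotation of the circle is determined by its additive angle and $R_{\alpha}\circ R_{\beta}=R_{\alpha+\beta}=R_{\beta}\circ R_{\alpha}$, the rotation group is abelian, and hence so is its subgroup $\langle R_{\alpha_1},\ldots,R_{\alpha_m}\rangle$. An isomorphic image of an abelian group is abelian, so $G_q$ is abelian, as claimed. Note that irrationality of the $\alpha_i$ plays no role, consistently with Remark \ref{remalmost}.

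I do not expect any serious obstacle here: the entire content is the observation that $h$ is independent of $p$, so that a single map simultaneously linearizes all the generators $T_{p_i}$ to commuting rotations. The only point deserving a word of care is the verification that $h$ and each $T_{p_i}$ may legitimately be regarded as honest homeomorphisms of the circle $\mathbb{S}^1\equiv\Delta_q$, so that conjugation by $h$ is a genuine group automorphism; but this is exactly what Theorem \ref{proptopconj} already supplies.
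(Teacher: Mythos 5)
Your proposal is correct and is essentially the paper's own argument: both rest on the single observation that the conjugacy $h(t)=q^{t-1}$ from Theorem \ref{proptopconj} depends only on $q$, so every generator $T_{p_i}=h\circ R_{\alpha_i}\circ h^{-1}$ is simultaneously linearized to a rotation, and commutativity of rotations transfers to $G_q$. Your extra framing of conjugation by $h$ as a group automorphism is a harmless elaboration of the same proof.
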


\begin{proof}
 Let $h: [0,1]\to \Delta_q$ be the homeomorphism constructed in the proof of Theorem \ref{proptopconj}. 
Then for each $i$ we have $T_{p_i}=h\circ R_{\alpha_i}\circ h^{-1}$, where $\alpha_i=\log_q{p_i}$ and 
$R_{\alpha_i}:\,x\mapsto x+\alpha_i \, ({\mod 1})$ is the corresponding rotation. Since any two circle rotations commute, we have 
$R_{\alpha_i}\circ R_{\alpha_j}=R_{\alpha_j}\circ R_{\alpha_i}$, and therefore $T_{p_i}\circ T_{p_j}=T_{p_j}\circ T_{p_i}$ as 
well, so $G_q$ is abelian. 
\end{proof}

This lemma tells us in particular that we have a well-defined surjective homomorphism $\mathbb{Z}^m\to G_q$ given by
\[
 \mathbb{Z}^m \ni {\bm{n}}=(n_1,n_2,\ldots,n_m) \; \mapsto \; T^{\bm{n}}\;=\; T_{p_1}^{n_1}\circ T_{p_2}^{n_2}\circ\cdots\circ T_{p_m}^{n_m} \; \in G_q \ .
\]
However, this homomorphism is not necessarily one-to-one (but see below). In any case, we see that $\mathbb{Z}^m$ acts 
on the circle in a special 
way as a group of piecewise-affine homeomorphisms of the circle. 
What is the relevance of this action to the study of the Sloane map in base $q$? In order to answer this question, 
we proceed as in \S \ref{subsec:ergoz}. Let 
$x_0=[0.1]_q= \frac{1}{q}\in \Delta_q$. Then for each $\bm{n}\in \mathbb{Z}_{+}^m$ we have
\begin{equation}\label{fracorbit}
 T^{\bm{n}}(x_0)\;=\; \frac{p_1^{n_1}p_2^{n_2}\cdots p_m^{n_m}}{q^{k(\bm{n})}} \ ,
\end{equation}
where $k(\bm{n})$ is the number of digits of the base-$q$ expansion of the numerator, given by
\[
 k(\bm{n})\;=\; \left\lceil \sum_{i=1}^m n_i\log_q{p_i}\right\rceil\;=\; \left\lceil \sum_{i=1}^m n_i\alpha_i\right\rceil \ .
\]
Thus, we see that the entire range of values assumed by the Sloane map is contained in a single orbit of the action of 
the semi-group $\mathbb{Z}_{+}^m \subset \mathbb{Z}^m$.

\subsection{Detour: ergodic free-abelian actions} We shall need the following facts about {\it free-abelian\/} actions. 
Let $G$ be a free abelian group of rank $k$, with a fixed set of generators $\{e_1,e_2,\ldots,e_k\}$. Each $g\in G$ has a unique 
representation $g=\sum n_i e_i$ with $n_i\in \mathbb{Z}$ for all $i=1,2,\ldots,k$. We write $\|g\|=\max_{1\leq i\leq k} |n_i|$, 
the {\it norm\/} of $g$. We denote by $G^+\subset G$ the semigroup consisting of all elements $g$ for which $n_i\geq 0$ for all $i$. 
For each $N\in \mathbb{Z}^+$, let $\Lambda_N(G)=\{g\in G: \|g\|\leq N\}$, and let $\Lambda_N(G^+)=\Lambda_N(G)\cap G^+$. 

Now suppose that the group $G$ (or the semigroup $G^+$) acts on a probability measure space $(X,\mu)$ as a group (or semigroup) of 
measure-preserving transformations (m.p.t.'s). In other words, if $T^g:X\to X$ denotes the m.p.t. associated to $g\in G$, then $T^g_*\mu=\mu$ 
for every $g$ (where the star denotes push-forward of measures). We say that the $G$-action (or $G^+$-action) on $(X,\mu)$ is {\it ergodic\/} if 
the only measurable subsets $E\subseteq X$ that are invariant under $G$ (or $G^+$) -- {\it i.e.\/} such that $(T^g)^{-1}E\subseteq E$ for all $g$ -- 
are either null-sets or full-measure sets. Just as for rank-one measure-preserving actions, there is a multi-dimensional version 
of Birkhoff's ergodic theorem, both in the case of free-abelian groups and free-abelian semigroups. 
We state the version for semigroups, which is the relevant one here.

\begin{customthm}{A}\label{multiplebirkhoff}
 If the $G^+$-action on $(X,\mu)$ is ergodic, then for every $f\in L^1(X,\mu)$ and for $\mu$-almost every $x\in X$ we have
\begin{equation}\label{multibirk1}
 \lim_{N\to\infty} \frac{1}{\# \Lambda_N(G^+)} \sum _{g\in \Lambda_N(G^+)} f\circ T^g(x) \;=\; \int_X f\,d\mu\ .
\end{equation}
\end{customthm}

A proof of this theorem can be found in \cite[ch.~2]{K}. 

One can also define {\it unique ergodicity\/} by analogy with the rank-one case. A $G$-action (or $G^+$-action) on a compact metric space 
$X$ through continuous maps is {\it uniquely ergodic\/} if there exists a unique Borel probability measure on $X$ which is $G$-invariant 
(or $G^+$-invariant). As for rank-one actions, we have the following fact (which, once again, we state only for semigroup actions).

\begin{customthm}{B}\label{uniqueergod}
 If a $G^+$-action by continuous maps on a compact metric space $X$ is uniquely ergodic 
then for every $f\in C(X)$ 
\[
 \frac{1}{\#\Lambda_N(G^+)} \sum _{g\in \Lambda_N(G^+)} f\circ T^g\ \ \textrm{converges uniformly to}\ \ \ \int_X f\,d\mu
\]
as $N\to\infty$, where $\mu$ is the unique $G^+$-invariant Borel probability measure.
\end{customthm}

The proof of the analogous statement for rank-one actions as given in, say, \cite[pp.160-161]{W} applies {\it mutatis mutandis\/} 
to the present case. The converse of Theorem \ref{uniqueergod} is also true, but will not be needed here.

\subsection{First density result} 
Now we go back to our investigation of the Sloane map. 
Given a positive integer $N$, let $\Lambda_N$ denote the ``cube'' $\Lambda_N=\{{\bm{n}}\in \mathbb{Z}^m: |n_i|\leq N,\, i=1,2,\ldots,m\}$, 
and let $\Lambda_N^+=\Lambda_N \cap \mathbb{Z}_{+}^m$. The {\it lower asymptotic density\/} of a subset 
$A\subseteq \mathbb{Z}_{+}^m$ is defined to be
\[
 D^{-}(A)\;=\; \lim\inf_{N\to \infty} \frac{1}{N^m} \#(A\cap \Lambda_N^+) \ .
\]
The {\it upper asymptotic density\/} of $A$, denoted $D^{+}(A)$, is similarly defined (replacing $\lim\inf$ by $\lim\sup$). We always have
$0\leq D^{-}(A)\leq D^{+}(A)\leq 1$. When $D^{-}(A) = D^{+}(A)$, this common value is called the {\it asymptotic density\/} of $A$ 
and it is denoted by $D(A)$. 

\begin{prop}\label{trivdensity}
 If the base $q$ is not a prime number, then the set
\[
 A\;=\; \left\{ {\bm{n}}=(n_1,n_2,\ldots,n_m)\in \mathbb{Z}_{+}^m: \, S_q(p_1^{n_1}p_2^{n_2}\cdots p_m^{n_m})=0\,\right\}
\]
has asymptotic density equal to $1$. 
\end{prop}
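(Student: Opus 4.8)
The plan is to exploit the fact that, when $q$ is composite, the mere divisibility of the monomial $p_1^{n_1}\cdots p_m^{n_m}$ by $q$ already forces a terminal zero digit, so that $A$ contains an explicit ``box'' of full density. First I would record the arithmetic input. Since $q$ is not prime, every prime factor of $q$ is at most $q/2<q$, and hence occurs in the list $p_1<\cdots<p_m$ of \emph{all} primes below $q$. Consequently I may write
\[
 q\;=\;\prod_{i=1}^m p_i^{a_i}\ ,
\]
with integer exponents $a_i\ge 0$ (not all zero). For $q=4$ this recovers exactly the earlier observation that $4\mid 2^m3^n$ as soon as $m\ge 2$.

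Next I would translate the condition ``$S_q=0$'' into a divisibility statement. If $N=p_1^{n_1}\cdots p_m^{n_m}$ is divisible by $q$, then $N\ge q$ and its rightmost base-$q$ digit equals $N\bmod q=0$; by the trivial dichotomy this already gives $S_q(N)=0$, i.e. $\bm n\in A$. By unique factorization, $q\mid N$ holds precisely when $n_i\ge a_i$ for every $i$ (the condition being vacuous for those $i$ with $a_i=0$). Hence
\[
 A\;\supseteq\; B\;:=\;\{\,\bm n\in\mathbb{Z}_{+}^m:\; n_i\ge a_i\ \text{for all}\ i\,\}\ .
\]

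Finally I would estimate the density of $B$, which is a routine lattice-point count. For $N\ge\max_i a_i$ one has $\#(B\cap\Lambda_N^+)=\prod_{i=1}^m(N-a_i+1)$, whereas $\#\Lambda_N^+=(N+1)^m$. Dividing by $N^m$ and letting $N\to\infty$,
\[
 \frac{1}{N^m}\,\#(B\cap\Lambda_N^+)\;=\;\prod_{i=1}^m\frac{N-a_i+1}{N}\;\longrightarrow\;1\ ,
\]
so $D^{-}(A)\ge D^{-}(B)=1$. Since trivially $\#(A\cap\Lambda_N^+)/N^m\le (N+1)^m/N^m\to 1$, a squeeze yields $D(A)=1$, as claimed.

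There is essentially no obstacle here: this is the \emph{easy half} of the dichotomy between composite and prime bases, and the only point requiring thought is the observation that compositeness of $q$ places all of its prime divisors inside $\{p_1,\dots,p_m\}$, which is precisely what makes the box $B$ nonempty and of full density. The genuine difficulty is deferred to the prime-base situation, where divisibility of the monomial by $q$ is impossible and one must instead invoke the ergodic machinery (Theorems \ref{multiplebirkhoff} and \ref{uniqueergod}) to locate \emph{interior} zero digits; that is the content of the subsequent Theorem \ref{rankkdensity}.
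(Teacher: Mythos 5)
Your proposal is correct and follows essentially the same route as the paper: write $q=\prod p_i^{a_i}$, observe that $n_i\ge a_i$ for all $i$ forces $q\mid p_1^{n_1}\cdots p_m^{n_m}$ and hence a terminal zero digit, and then count the full-density box $\prod(N-a_i+1)/N^m\to 1$. Your added remarks (that compositeness of $q$ guarantees its prime factors lie among $p_1,\dots,p_m$, and the final squeeze against $D^{+}(A)\le 1$) are harmless elaborations of steps the paper leaves implicit.
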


\begin{proof}
Since $q$ is not a prime, we can certainly write $q= p_1^{a_1}p_2^{a_2}\cdots p_m^{a_m}$ where each $a_i\geq 0$. Now, if 
$\bm{n}\in \mathbb{Z}_{+}^m$ is such that $n_i\geq a_i$ for all $i$, then $q$ divides $p_1^{n_1}p_2^{n_2}\cdots p_m^{n_m}$, and therefore
$S_q(p_1^{n_1}p_2^{n_2}\cdots p_m^{n_m})=0$. This shows that $A\supseteq \{\bm{n}\in \mathbb{Z}_{+}^m:\, n_i\geq a_i \ \textrm{for all}\;i\}$. 
Hence, for every $N$ sufficiently large we have
\[
 \#\left(A\cap \Lambda_N^+\right)\;\geq\; \prod_{i=1}^{m} (N-a_i+1) \ ,
\]
and therefore
\[
 D^{-}(A)\;\geq\; \lim_{N\to\infty}\,\frac{1}{N^m} \prod_{i=1}^{m} (N-a_i+1)\;=\;1
\]
This proves that $D(A)=D^{-}(A)=1$. 
\end{proof}

The set $A$ in Proposition \ref{trivdensity} has asymptotic density $1$ for trivial reasons: most $m$-tuples 
$(n_1,n_2,\ldots,n_m)$  are such 
that $n_i\geq a_i$ for all $i$. Note that we did not exploit the abelian action introduced in \S \ref{sec:abelact}. We will prove 
in \S \ref{sec:secdensity} below a 
more refined version of Proposition \ref{trivdensity} using the ergodic properties of such abelian actions.

\subsection{Second density result}\label{sec:secdensity}

Our second density result makes use of the group $G_q$ and its action on the circle.  
In fact, it will make use of certain subgroups of $G_q$, which turn out to be {\it free abelian\/}. 

We will need the following lemma. Given $1\leq i_1<i_2<\cdots<i_k\leq m$, let us denote
the subgroup of $G_q$ generated by $\{T_{p_{i_1}}, T_{p_{i_2}},\ldots, T_{p_{i_k}}\}$ by 
$G(i_1,i_2,\ldots,i_k)$. 
Recall that $\alpha_i=\log_q{p_i}$ is the rotation number of $T_{p_i}$ 

\begin{lemma}\label{freeuniqergod}
 If the numbers $1,\alpha_{i_1},\alpha_{i_2},\ldots, \alpha_{i_k}$ are rationally independent 
{\footnote{In other words, linearly independent over the field of rational numbers}}, then 
$G(i_1,i_2,\ldots,i_k)$ is a free abelian group of rank $k$. Moreover, its action on the circle $\Delta_q\equiv \mathbb{S}^1$ 
is uniquely ergodic. 
\end{lemma}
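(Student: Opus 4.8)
The plan is to split the statement into two parts: (i) freeness of rank $k$, and (ii) unique ergodicity of the action. For part (i), recall from the proof of Theorem \ref{proptopconj} that each $T_{p_i} = h\circ R_{\alpha_i}\circ h^{-1}$ where $h$ is the fixed conjugating homeomorphism. Hence the subgroup $G(i_1,\ldots,i_k)$ is conjugate via $h$ to the subgroup of circle rotations generated by $R_{\alpha_{i_1}},\ldots,R_{\alpha_{i_k}}$. I would therefore prove that the map $\mathbb{Z}^k\to \mathrm{Rot}(\mathbb{S}^1)$ sending $(n_1,\ldots,n_k)\mapsto R_{n_1\alpha_{i_1}+\cdots+n_k\alpha_{i_k}}$ is injective. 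A nonzero kernel element would give integers $n_j$, not all zero, with $\sum_j n_j\alpha_{i_j}\in\mathbb{Z}$, i.e. $\sum_j n_j\alpha_{i_j} = n_0$ for some $n_0\in\mathbb{Z}$; this is precisely a nontrivial rational relation among $1,\alpha_{i_1},\ldots,\alpha_{i_k}$, contradicting the hypothesis of rational independence. Since conjugation by $h$ is a group isomorphism, $G(i_1,\ldots,i_k)$ is then free abelian of rank $k$.

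For part (ii), the cleanest route is again to transport everything through $h$: the measure $\mu$ of Theorem \ref{proptopconj} is $h_*\lambda$ where $\lambda$ is Lebesgue measure on $[0,1]\equiv\mathbb{S}^1$, and $h$ conjugates the $G$-action to the action of the rotation subgroup. So unique ergodicity of $G(i_1,\ldots,i_k)$ acting on $\Delta_q$ with invariant measure $\mu$ is equivalent to unique ergodicity of the group generated by $R_{\alpha_{i_1}},\ldots,R_{\alpha_{i_k}}$ acting on $[0,1]$ with invariant measure $\lambda$. The latter is a standard fact: I would prove that Lebesgue measure is the unique invariant Borel probability measure for this rotation group whenever $1,\alpha_{i_1},\ldots,\alpha_{i_k}$ are rationally independent. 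The natural argument uses Fourier analysis. If $\eta$ is any invariant probability measure, consider its Fourier coefficients $\hat\eta(\ell)=\int_{\mathbb{S}^1} e^{-2\pi i \ell t}\,d\eta(t)$ for $\ell\in\mathbb{Z}$. Invariance under $R_{\alpha_{i_j}}$ forces $\hat\eta(\ell)\,e^{2\pi i \ell \alpha_{i_j}}=\hat\eta(\ell)$ for every $j$, so either $\hat\eta(\ell)=0$ or $\ell\alpha_{i_j}\in\mathbb{Z}$ for all $j$. For $\ell\neq 0$, rational independence of $1,\alpha_{i_1},\ldots,\alpha_{i_k}$ forbids $\ell\alpha_{i_j}\in\mathbb{Z}$ simultaneously (indeed already for a single $j$ with $\alpha_{i_j}$ irrational), so $\hat\eta(\ell)=0$ for all $\ell\neq 0$; hence $\eta=\lambda$.

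Two remarks on bookkeeping. First, the conjugacy $h$ is only a homeomorphism with a single break point, but that is irrelevant here: unique ergodicity is a purely topological-dynamical/measure-theoretic property that is preserved under any topological conjugacy carrying one invariant measure to the other, and $h_*\lambda=\mu$ was already established. Second, the Lemma is stated for the full group $G(i_1,\ldots,i_k)$, whereas the applications (via Theorem \ref{uniqueergod}) concern the semigroup $G^+$; since the invariant-measure analysis above already pins down $\lambda$ as the unique \emph{group}-invariant measure, and any semigroup-invariant measure for a group of invertible transformations is automatically group-invariant, the semigroup version follows immediately and needs no extra work.

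The main obstacle is part (ii): part (i) is essentially a formal unwinding of the rational-independence hypothesis, but unique ergodicity genuinely requires ruling out \emph{all} invariant measures, not merely verifying that the known orbit equidistributes. The Fourier-coefficient argument is the crux, and the one subtlety to handle carefully is the quantifier ``for all $j$'' in the annihilation condition $\ell\alpha_{i_j}\in\mathbb{Z}$ — one must invoke rational independence to conclude $\hat\eta(\ell)=0$, rather than the weaker statement that a single $\alpha_{i_j}$ is irrational, because in principle a nonzero coefficient could survive if every $\ell\alpha_{i_j}$ happened to be an integer. Rational independence of $1,\alpha_{i_1},\ldots,\alpha_{i_k}$ is exactly what closes this gap.
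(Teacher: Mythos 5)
Your proof is correct and follows essentially the same route as the paper: freeness is obtained by conjugating to the rotation group via $h$ and turning a relation into a rational dependence among $1,\alpha_{i_1},\ldots,\alpha_{i_k}$, and unique ergodicity ultimately rests on the irrationality of at least one $\alpha_{i_j}$. The only difference is cosmetic: where you re-derive unique ergodicity of the full rotation group from scratch via Fourier coefficients, the paper simply notes that the single generator $T_{p_{i_1}}$, being conjugate to an irrational rotation, is already uniquely ergodic, so \emph{a fortiori} the whole group action is.
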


\begin{proof}
Suppose $(n_1,n_2,\ldots,n_k)\in \mathbb{Z}^k$ is such that 
\[ 
T_{p_{i_1}}^{n_1} \circ T_{p_{i_2}}^{n_2} \circ \cdots \circ T_{p_{i_k}}^{n_k}\;=\; Id \ .
\]
From this and the fact that each $T_{p_i}$ is conjugate to $R_{\alpha_i}$ by the same conjugating map, we have
\[
 R_{\alpha_{i_1}}^{n_1}\circ R_{\alpha_{i_2}}^{n_2}\circ \cdots \circ R_{\alpha_{i_k}}^{n_k} \;=\; Id
\]
But then $n_1\alpha_{i_1}+n_2\alpha_{i_2}+\cdots + n_k\alpha_{i_k}\equiv 0 \;(\!\!{\mod 1})$. In other words, 
there exists $N\in \mathbb{Z}$ such that $n_1\alpha_{i_1}+n_2\alpha_{i_2}+\cdots + n_k\alpha_{i_k} =N$. 
The hypothesis of rational independence implies that $n_1=n_2=\cdots =n_k=N=0$. Hence there are no 
non-trivial relations in $G(i_1,i_2,\ldots,i_k)$. This shows the group is free abelian as stated. 
Moreover, at least one of the $\alpha_{i_j}$'s must be irrational. Say $\alpha_{i_1}$ is irrational; 
then $T_{p_{i_1}}$, being conjugate to an irrational rotation, is uniquely ergodic. Therefore, 
{\it a fortiori\/}, the action of $G(i_1,i_2,\ldots,i_k)$ on the circle is uniquely ergodic.
\end{proof}

We are now in a position to state and prove our second density result. This result refines Proposition \ref{trivdensity}, 
and in particular covers the case when the base $q$ is prime. 

\begin{thm}\label{rankkdensity}
 Let $1\leq i_1<i_2<\cdots<i_k\leq m$ be chosen so that the numbers $1,\alpha_{i_1},\alpha_{i_2},\ldots, \alpha_{i_k}$ are rationally independent. 
Then for every divisor $d$ of $q$, the set
\[
 A\;=\;\left\{ (n_1,n_2,\ldots,n_k)\in \mathbb{Z}_+^k : \ S_q(p_{i_1}^{n_1} p_{i_2}^{n_2}\cdots p_{i_k}^{n_k}\cdot d)=0\,\right\}
\]
has asymptotic density equal to $1$. 
\end{thm}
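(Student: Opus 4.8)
The plan is to reduce the statement to the unique ergodicity of the free-abelian action of $G(i_1,\ldots,i_k)$ supplied by Lemma \ref{freeuniqergod}, and then to run a counting argument entirely parallel to the one used for Corollary \ref{rankonedensity}, with the rank-one unique-ergodicity input replaced by Theorem \ref{uniqueergod}. Throughout I abbreviate $T^{\bm n}=T_{p_{i_1}}^{n_1}\circ\cdots\circ T_{p_{i_k}}^{n_k}$ and $x_{\bm n}=T^{\bm n}(x_0)$ with $x_0=1/q$, so that by \eqref{fracorbit} one has $x_{\bm n}=P/q^{k(\bm n)}$, where $P=p_{i_1}^{n_1}\cdots p_{i_k}^{n_k}$ and $k(\bm n)=\lceil\sum_j n_j\alpha_{i_j}\rceil$. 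The unique invariant measure of the action is the measure $\mu$ of Theorem \ref{proptopconj}, since each $T_{p_i}$ preserves $\mu$ and unique ergodicity pins it down.

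First I would absorb the divisor $d$ into a single piecewise-affine self-map of $\Delta_q$. Define $\psi:\Delta_q\to\Delta_q$ by $\psi(x)=d\,x/q^{t(x)}$, where $t(x)\in\mathbb Z$ is the unique integer with $d\,x/q^{t(x)}\in[1/q,1)$; since $d$ is fixed, $\psi$ is piecewise affine with finitely many branches (for $1\leq d<q$ it is exactly the map $T_{d,q}$ of \eqref{paffineT}, with $T_{1,q}=\mathrm{id}$; the case $d=q$ is trivial because $P\cdot q$ always ends in a zero, so $A=\mathbb Z_+^k$). The key bookkeeping observation is that $P\cdot d=q^{\,k(\bm n)+t(x_{\bm n})}\,\psi(x_{\bm n})$ with $\psi(x_{\bm n})\in[1/q,1)$, so the base-$q$ digits of the integer $P\cdot d$ are precisely the base-$q$ digits of $\psi(x_{\bm n})$. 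Consequently $S_q(P\cdot d)=0$ if and only if some base-$q$ digit of $\psi(x_{\bm n})$ vanishes.

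Next I would introduce the relevant subsets of the circle. For $j\geq 1$ let $B_j\subseteq\Delta_q$ be the set of $x$ such that $\psi(x)$ has at least one zero among its first $j$ base-$q$ digits. Because $\psi$ is piecewise affine and the digit conditions cut out finite unions of intervals, each $B_j$ is a finite union of intervals. Moreover $\psi$ preserves $\mu$ — multiplication by a positive constant and renormalization by a power of $q$ each leave $dx/(x\log q)$ invariant, and for $1<d<q$ the map $\psi=T_{d,q}$ is a circle homeomorphism, hence essentially bijective — so $\mu(B_j)=\mu(\{z:\ z\ \text{has a zero among its first}\ j\ \text{digits}\})$, which increases to $1$ as $j\to\infty$ by the same elementary computation used in the proof of Corollary \ref{rankonedensity}. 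The decisive link is that whenever $x_{\bm n}\in B_j$ and the total digit count $k(\bm n)+t(x_{\bm n})$ is at least $j$, the vanishing digit detected by $B_j$ is a genuine digit of $P\cdot d$, whence $\bm n\in A$.

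Finally I would run the density estimate. Setting $A_j=\{\bm n\in\mathbb Z_+^k:\ x_{\bm n}\in B_j\}$, regularity of $\mu$ lets me choose, for each $j$, a continuous $f_j$ with $0\leq f_j\leq\chi_{B_j}$ and $\int f_j\,d\mu\geq\mu(B_j)-\tfrac1{2j}$; applying Theorem \ref{uniqueergod} to the uniquely ergodic $G(i_1,\ldots,i_k)^+$-action at the point $x_0$ gives
\[
\liminf_{N\to\infty}\frac{1}{\#\Lambda_N(G^+)}\sum_{\bm n\in\Lambda_N^+}\chi_{B_j}(x_{\bm n})\;\geq\;\int f_j\,d\mu\;\geq\;\mu(B_j)-\frac1j,
\]
and since $\#\Lambda_N(G^+)=(N+1)^k\sim N^k$ this yields $D^-(A_j)\geq\mu(B_j)-1/j$. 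Because $k(\bm n)+t(x_{\bm n})\geq k(\bm n)\to\infty$, all but finitely many $\bm n\in A_j$ have at least $j$ digits and therefore lie in $A$, so $D^-(A)\geq\mu(B_j)-1/j$; letting $j\to\infty$ forces $D(A)=1$. The only genuinely new point beyond the rank-one argument is the reduction via $\psi$ that absorbs $d$ while preserving both the interval structure of $B_j$ and the measure $\mu$; I expect verifying those two properties of $\psi$ to be the main (though routine) obstacle, after which the multidimensional unique ergodicity of Theorem \ref{uniqueergod} does all the remaining work.
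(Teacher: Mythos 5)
Your proof is correct and follows essentially the same route as the paper's: unique ergodicity of the free abelian action (Lemma \ref{freeuniqergod} plus Theorem \ref{uniqueergod}), continuous minorants of $\chi_{B_j}$ obtained from regularity of $\mu$, and the observation that only finitely many $\bm{n}\in A_j$ can fail to lie in $A$. The one difference is cosmetic: where you keep the orbit of $x_0=1/q$ and absorb the divisor $d$ into a fixed piecewise-affine map $\psi=T_{d,q}$, the paper simply runs the identical argument on the orbit of the point $w_0=d/q$ (since $\psi$ commutes with every $T_{p_i}$, your $\psi(x_{\bm n})$ is exactly the paper's $w_{\bm n}=T^{\bm n}(w_0)$), which spares the routine verifications that $\psi$ preserves $\mu$ and the interval structure of the sets $B_j$.
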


\begin{proof}
Things have been set up so that the same argument used in the proof of Corollary \ref{rankonedensity} can be applied, {\it mutatis mutandis\/}. 
By Lemma \ref{freeuniqergod}, the action of the group $G=G(i_1,i_2,\ldots,i_k)\cong \mathbb{Z}^k$ on the circle $\Delta_q\equiv \mathbb{S}^1$ 
is uniquely ergodic; the unique invariant measure is the measure $\mu$ constructed in Theorem \ref{proptopconj}. 
Let us fix the divisor $d$ of the base $q$. 
We are interested in a particular orbit of the semigroup $G^+\cong \mathbb{Z}_+^k$, namely that of the point $w_0=d/q\in \Delta_q$. 
For each $\bm{n}\in \mathbb{Z}_+^k$, let us write $w_{\bm{n}}=T^{\bm{n}}(w_0)$. Also, denote by $\ell(\bm{n})$ the number of digits 
in the base-$q$ expansion of the number $p_{i_1}^{n_1} p_{i_2}^{n_2}\cdots p_{i_k}^{n_k}\cdot d$. Then, just as in \eqref{fracorbit}, we have
\begin{equation}\label{fracorbit2}
 w_{\bm{n}}\;=\; \frac{p_{i_1}^{n_1} p_{i_2}^{n_2}\cdots p_{i_k}^{n_k}\cdot d}{q^{\ell(\bm{n})}} \ .
\end{equation}
As in the proof of Corollary \ref{rankonedensity}, for each $j\geq 1$, let $B_j\subseteq \Delta_q$ be the set of all points $x$ whose base-$q$ expansion 
has at least one digit $0$ among its first $j$ digits. As we saw there, $\mu(B_j)\nearrow 1$ as $j\to\infty$. 
Let $f_j: \Delta_q\to \mathbb{R}$ be as in that proof also; thus, each $f_j$ is continuous 
and $0\leq f_j \leq \chi_{B_j}$, and 
\begin{equation}\label{newuniq1} 
 \int_{\Delta_q}f_j\,d\mu \;\geq\; \mu(B_j)-\frac{1}{2j}\ .
\end{equation}
The point now -- as is clear from \eqref{fracorbit2} -- is that
\[
 S_q(p_{i_1}^{n_1} p_{i_2}^{n_2}\cdots p_{i_k}^{n_k}\cdot d)=0 \;\iff \; T^{\bm{n}}(w_0)=w_{\bm{n}}\in B_j \ \textrm{for some}\ j\leq \ell({\bm{n}})\ .
\]
In other words, $\bm{n}\in A$ if and only if $w_{\bm{n}}\in B_j$ for some $j\leq k_{\bm{n}}$. By analogy with what we did in the proof of 
Corollary \ref{rankonedensity}, for each $j\geq 1$ we define $A_j=\{\bm{n}\in \mathbb{Z}_+^k:\; w_{\bm{n}}\in B_j\}$. Note that all but finitely many elements 
of $A_j$ belong to $A$. Hence, in order to prove that $D(A)=1$, it suffices to show that $D^{-}(A_j)\nearrow 1$ as $j\to \infty$. 
Since the action of $G^+$ on the circle is uniquely ergodic, combining Theorem \ref{uniqueergod} with \eqref{newuniq1} we deduce that, for each 
fixed $j$ and all sufficiently large $N$,  
\[
 \frac{1}{\#\Lambda_N^+} \sum _{\bm{n}\in \Lambda_N^+} f_j\circ T^{\bm{n}}(w_0)\;\geq\; \int_{\Delta_q}f_j\,d\mu - \frac{1}{2j}\;\geq\; \mu(B_j)-\frac{1}{j}\ .
\]
Since $f_j\leq \chi_{B_j}$, it follows that
\[
 \frac{1}{\#\Lambda_N^+} \sum _{\bm{n}\in \Lambda_N^+} \chi_{B_j}(w_{\bm{n}})\;\geq\; \mu(B_j)-\frac{1}{j}\ ,
\]
for all sufficiently large $N$. Letting $N\to \infty$, this shows that
\[
 D^{-}(A_j)\;\geq \; \mu(B_j) - \frac{1}{j}
\]
for all $j$, and therefore $D^{-}(A_j)\nearrow 1$ as $j\to \infty$, as required. 
\end{proof}

\subsection{Further generalizations}\label{sec:further}
Since in this paper we are primarily interested in the Sloane map, in all of the above we have focused on products of 
prime numbers smaller than the base $q$. 
However, most of what we have done goes through when some or all of such primes are greater than $q$. 
Recall from Remark \ref{largerp} that if $p>q$ we can still define the circle maps $T_{p,q}$, and these still commute with each other because 
they all share a common absolutely continuous invariant measure. 
Thus, suppose that $F$ is some non-empty finite set of primes, and that not all the prime divisors of $q$ are in $F$. 
Then the set $\{1\}\cup \{\log_q{p}:\,p\in F\}$ is rationally independent. Therefore the maps $T_{p,q}$ for $p\in F$ generate a 
free-abelian group $G_F$ and the action of $G_F$ on the circle $\Delta_q$ is uniquely ergodic 
(as in Lemma \ref{freeuniqergod}, and the proof is the same). Moreover, the density result given in Theorem \ref{rankkdensity} also holds true 
for the list of primes in $F$. In particular, the vast majority of products of the form $\prod_{p\in F} p^{n_p}$ (with $n_p\in \mathbb{Z}^+$) 
have at least one digit zero in their base-$q$ expansions. In \S \ref{sec:compevid} we will make a much stronger assertion, in the form of a 
conjecture, to the effect that the digits in the base-$q$ expansions of such products become asymptotically equidistributed as 
$\max\{n_p:\, p\in F\}\to \infty$.

\section{Computational evidence and heuristics}\label{sec:compevid}

\subsection{Asymptotic distribution of digits in long products of primes}

To ease many statements to be made in rest of the paper, we will call {\it Sloane's conjecture\/} the positive answer to the question 
about multiplicative persistence raised by Sloane in 1973. Our goal is to place Sloane's conjecture as a consequence of a
much more general conjecture concerning long products of primes (chosen from a given finite set).

As in \S \ref{sec:further}, let us be given a base $q>1$ and a finite set of primes $F$ with the property that not all prime 
divisors of $q$ belong to $F$. 
We call such an $F$ an {\it allowable set of primes for $q$\/}.
Suppose we play the following game: starting with 
any given positive integer $a$, we randomly select a sequence $\pi_1,\pi_2,\ldots, \pi_n,\ldots$ of primes in $F$ and we use them to generate the 
sequence  of products $N_n=a\cdot\pi_1\cdot\pi_2\cdots\pi_n$  with $n\in \mathbb{Z}^+$. Then it turns out that, regardless of the initial 
seed $a$ and of the sequence of primes selected, the 
frequency of each digit $d\in\{0,1,\ldots,q-1\}$ in $N_n$ always seems to approach $\frac{1}{q}$ as $n\to\infty$.  More seems to be true, 
and we formulate the conjecture that was revealed by our numerical computations in the following elementary way. 

\begin{customcon}{3}\label{conj:convvtoequi0} 
(Elementary formulation).
Given an integer $q>1$, an allowable set of primes $F$ for $q$, and a positive integer 
$a$, consider any of the possible sequences of products defined by setting $N_0=a$ and then, for each $n\geq0$, 
$N_{n+1}=\pi_{n+1}\cdot N_n$ where each $\pi_i$ is a element of $F$. Then the digits $\{0,1,2,\dots,q-1\}$ are asymptotically 
equidistributed (their numbers tend  to be in equal proportions) when $n\to\infty$ in the base-$q$ representations of these 
successive $N_n$'s. Furthermore, the same holds true for blocks of consecutive digits of any length, \emph{i.e.,} the 
asymptotic proportion of each block of digits of length ${\ell}>0$ is given by $\frac{1}{q^{\ell}}$, the reciprocal of the number of 
distinct blocs of length ${\ell}$ in base $q$. 
\end{customcon}

We remark that when the restriction of allowability is removed (but $F$ remains finite) it seems that the non-zero digits remain well 
distributed while the zeros may be much more abundant, which is
more than we need 
in order to get the Sloane conjecture as a
corollary.  

\subsection{Multiplication automata}\label{multaut} Let us give a more precise and more general statement of the above conjecture, expressing it in the language 
of {\it automata\/}. The patterns arising by the multiplication game described above strongly suggest analogy with 
the dynamics of some simple \emph{cellular automata} (abbreviated $\CA$)
 \emph{i.e.,} automorphisms of a shift space that commute with the shift \cite{He}. 
Successsive multiplications by any $p>1$ (or by any sequence of such $p$'s) do not yield a $\CA$, because of the {\it carryover effect\/}, 
but give rise to a close enough object, 
which we define as follows.   
 
\begin{defi}\label{quasicellular}
 Let $q>1$ and let $F$ be an allowable set of primes for $q$. A multiplication automaton (or $\MA$), with alphabet $\A_q = \{0,1,\ldots,q-1\}$ 
consists of a finite sequence of primes $\pi_n\in F$ (with $n\geq 1$) called the multipliers, and two maps, the configuration map 
$x:\mathbb{Z}_+^2\to \A_q$ and the carryover map $c:\mathbb{Z}_+^2\to \A_q$ satisfying the following rules
for all $n\geq 1$ and all $i\geq 1$:
\begin{enumerate}
 \item[(i)] $x_{i,n}=\pi_n\cdot x_{i,n-1} + c_{i-1,n}\mod q$;
 \item[(ii)] $\displaystyle{c_{i,n} = \frac{1}{q}(\pi_n\cdot x_{i,n-1} + c_{i-1,n} - x_{i,n})}$.
\end{enumerate}
\end{defi}
Here, we assume that the initial row $(x_{i,0})_{i\in\mathbb{Z}^+}$ of the configuration map is given, as well as the column 
$(c_{0,n})_{n\in \mathbb{Z}^+}$ of initial carryovers. We also assume that $x_{i,0}=0$ for all but finitely many values of $i$, 
and we call the number $a=\sum_{i\geq 0} x_{i,0}q^i$ the {\it seed\/} of the $\MA$.  Note that each row of an automaton has only finitely 
many non-zero elements; in other words, for each $n\in \mathbb{Z}^+$ there exists a smallest $k_n\geq 0$ such that $x_{i,n}=0$ for all $i\geq k_n$. 
If the initial seed is non-zero, then $k_n\to\infty$ as $n\to\infty$. 

Despite appearances from the above recursive formulas, it turns out that the values of the configuration map of a $\MA$ are determined 
{\it purely locally\/},  as the following proposition shows.  
This further reinforces the similarity of $\MA$'s with $\CA$'s. {\footnote{Warning: The word ``locally'' is used here with a 
different meaning from the one used when studying, e.g.,cellular automata; in that other context, the carryover is indeed a very 
non-local effect.}}

\begin{prop}\label{prop:locality}
 In every $\MA$, the configuration value $x_{i,n}$ depends only on the three values $x_{i,n-1},\, x_{i-1,n-1},\, x_{i-1,n}$ 
and on the multiplier $\pi_n$. 
\end{prop}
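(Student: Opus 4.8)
The plan is to isolate the single quantity in rule (i) that threatens locality, namely the incoming carry $c_{i-1,n}$, and to show that it too is recoverable from the prescribed local data. Rule (i) reads
\[
 x_{i,n}\;=\;\left(\pi_n\, x_{i,n-1} + c_{i-1,n}\right)\bmod q\ ,
\]
so $x_{i,n}$ is manifestly a function of $x_{i,n-1}$, of $\pi_n$, and of $c_{i-1,n}$. Hence it suffices to prove that $c_{i-1,n}$ is determined by $x_{i-1,n-1}$, $x_{i-1,n}$ and $\pi_n$ alone.

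First I would rewrite rule (ii) at the site $(i-1,n)$ as one division-with-remainder identity. Multiplying it by $q$ and rearranging gives
\[
 q\,c_{i-1,n}\;=\;\pi_n\, x_{i-1,n-1} + c_{i-2,n} - x_{i-1,n}\ ,
\]
which displays $x_{i-1,n}\in\{0,1,\ldots,q-1\}$ as the remainder and $c_{i-1,n}$ as the quotient of the division of $\pi_n x_{i-1,n-1}+c_{i-2,n}$ by $q$. The apparent obstacle is the term $c_{i-2,n}$: read naively, the recursion for $c_{i-1,n}$ reaches back to $c_{i-2,n}$, then to $c_{i-3,n}$, and so on all the way to the boundary column, which would make $x_{i,n}$ depend on an unbounded amount of data.

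The key point that dissolves this obstacle — and the step I expect to be the crux — is that the carryover is bounded: by the definition of an $\MA$ its codomain is $\A_q$, so $0\le c_{i-2,n}\le q-1$. Writing $A=\pi_n x_{i-1,n-1}-x_{i-1,n}$, the identity above reads $q\,c_{i-1,n}=A+c_{i-2,n}$ with $c_{i-2,n}\in\{0,1,\ldots,q-1\}$; thus $q\,c_{i-1,n}$ lies in the block of $q$ consecutive integers $\{A,A+1,\ldots,A+q-1\}$. Exactly one member of such a block is divisible by $q$, so $q\,c_{i-1,n}$ must equal that member, and the carry is pinned down independently of $c_{i-2,n}$:
\[
 c_{i-1,n}\;=\;\left\lceil \frac{\pi_n x_{i-1,n-1}-x_{i-1,n}}{q}\right\rceil\ .
\]
This is precisely the uniqueness statement needed to sever the backward chain of carry dependencies.

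Combining the two displayed formulas, $x_{i,n}$ is a function of $x_{i,n-1}$, $x_{i-1,n-1}$, $x_{i-1,n}$ and $\pi_n$, which is the assertion. For the edge case $i=1$ the site $(i-1,n)=(0,n)$ is the least significant position, into which no carry enters ($c_{-1,n}=0$); the same computation then goes through with $c_{i-2,n}$ replaced by $0$, so the displayed formula for the carry (now an exact quotient) and hence the conclusion persist.
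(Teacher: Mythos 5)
Your proof is correct and follows essentially the same route as the paper's: both recover the incoming carry $c_{i-1,n}$ from the local data at position $i-1$ by exploiting the fact that $c_{i-2,n}$ is confined to $\{0,1,\dots,q-1\}$. Your closed form $c_{i-1,n}=\lceil(\pi_n x_{i-1,n-1}-x_{i-1,n})/q\rceil$ is equivalent to the paper's expression $\frac{1}{q}\bigl[\pi_n x_{i-1,n-1}+((x_{i-1,n}-\pi_n x_{i-1,n-1})\bmod q)-x_{i-1,n}\bigr]$, which is obtained there by first solving rule (i) at position $i-1$ for $c_{i-2,n}$ and then substituting into rule (ii).
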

\begin{proof}
 We refer to formulas (i) and (ii) in Definition \ref{quasicellular}. From (ii) with $i-1$ replacing $i$ we see that
\begin{equation}\label{localone}
 c_{i-1,n}\;=\;\frac{1}{q}(\pi_n x_{i-1,n-1} + c_{i-2,n} - x_{i-1,n}) \ .
\end{equation}
But formula (i) with $i-1$ replacing $i$ gives us
\begin{equation}\label{localtwo}
 c_{i-2,n}\;=\; (x_{i-1,n} - \pi_n x_{i-1,n-1})\!\!\!\!\! \mod q
\end{equation}
Combining \eqref{localone} and \eqref{localtwo} with (i) we deduce that 
\[
 x_{i,n}= \pi_n x_{i,n-1} + \frac{1}{q}\left[\pi_n x_{i-1,n-1}  + ((x_{i-1,n} - \pi_n x_{i-1,n-1})\!\!\!\!\!\!  \mod q) - x_{i-1,n} \right] \, ,
\]
which is the desired result. 
\end{proof}

\begin{rem}
The way the $\MA$ operates purely locally as told by Proposition \ref{prop:locality} 
is illustrated in part (a) of Figure \ref{fig:qauto} where in each group of positions, the one framed in black is the value of $x_{i,n}$.
It is worth pointing out that $\MA$'s can be interpreted as examples of \emph{error diffusion}  where the input is the previous line, 
the modified input is the product of the input by $\pi_n$ added to the carry over, and the new error is the next carry over 
(see \cite{Tr} and references therein). Incidentally, part (b) of the same figure gives the allowed configuration of another automaton, 
namely the one that describes the evolution along columns (a simple model of discrete epitaxy).  
The configurations in part (c) of that figure are the forbidden ones resulting in a solid black square when nothing can be computed.    
\end{rem}
 
\subsubsection*{Example 1}
The simplest example of a multiplication automaton relevant to Sloane's conjecture is obtained by taking $q=3$ and $F=\{2\}$, 
choosing $a=1$ as the seed, and setting $c_{0,n}=0$ for all $n$ as the initial sequence of carryovers. Note that in this case 
$\pi_n=2$ for all $n$. The configuration map for this automaton yields the list of all powers of $2$ written in base $3$.
This list corresponds to the orbit of $\frac{1}{3}=[0.1]_3$ under the map $T_{2,3}$ introduced in \S \ref{subsec:ergoz}.
In the notation introduced above, we have $2^n=\sum_{i=0}^{k_n-1} x_{i,n}q^i$ for all $n$. 
Figure 2 exhibits the first 45 rows of this $\MA$, {\it i.e.\/} the first 45 powers of 2 (out of the about 8000 that we have computed).  
Some patterns seen there  are reminiscent of those appearing in the simplest $\CA$'s.

\subsubsection*{Example 2}
Another pair of examples is obtained by taking $q=4$ and $F=\{3\}$, setting $c_{0,n}=0$ for all $n$ as before, and choosing either 
$a=1$ or $a=2$ as the seed. The resulting pair of $\MA$ yields the two sequences $(3^n)_{n\geq 0}$ and $(2\cdot 3^n)_{n\geq 0}$ 
written in base $4$, which are the relevant ones for Sloane's conjecture in that base. These sequences correspond to the orbits 
of $\frac{1}{4}=[0.1]_4$ and $\frac{1}{2}=[0.2]_4$ under successive iterations of the map $T_{3,4}$ introduced in \S \ref{subsec:ergoz}. 
The 23 first rows of these automata are presented side by side in Figure 3. 

A $\MA$ as given in Definition \ref{quasicellular} can be used as a statistical model for the 
discrete-time evolution of a mixture of $q$ species (labeled balls, say, or distinct molecules) with positions at time $n$  labeled by 
the integers $\{0,1,\dots , k_n-1\}$.  One is then interested in the asymptotic behavior of the mixture as characterized by the proportions 
of species populations as $n\to\infty$.  This analogy with statistical mechanics and the observed patterns in the two examples above 
(see Figures 2 and 3), as well as in several other examples we have investigated, suggest the following conjecture.
{\footnote{This conjecture is indeed more general than its elementary counterpart given earlier: the latter corresponds to the cases 
when the initial sequence of carryovers in the $\MA$ is identically zero.}}

\begin{customcon}{3}\label{conj:convvtoequi} 
(General formulation.)
The configuration map of every multiplication automaton with non-zero seed converges to an equilibrium in the following sense.
For each $\ell>0$ and each $\ell$-block $b_1b_2\cdots b_\ell\in \mathcal{A}_q^{\ell}$, we have
\[
 \lim_{n\to\infty} \frac{1}{k_n}\# \left\{ 0\leq i\leq k_n-\ell:\; x_{i,n}=b_1,\, x_{i+1,n}=b_2,\, \ldots ,\,x_{i+\ell-1,n}=b_\ell\right\}
\,=\, \frac{1}{q^{\ell}}\ .
\]
\end{customcon} 
 
In other words, the populations of the $q$ species in the mixture become asymptotically perfectly balanced as time evolves. 
Figure 4 shows clearly that the proportion of zeros in $2^n$ ({\it i.e.\/}, in row $n$ of the automaton of Example 1) goes to 
one third of the digits when $n$ becomes 
large enough. Of course there are fluctuations in the proportions but the sizes of these fluctuations go to zero, 
as one would expect in a model of statistical physics. Similar data have been obtained with bases $4$, $5$, and $10$ and a variety of 
allowable sets $F$ of prime multipliers. In many cases, we also checked that the expected asymptotic statistics of {\it blocks\/} behave 
according to the Conjecture.

Besides potential applications to random number generators, Conjecture \ref{conj:convvtoequi}, if proved true, would yield several interesting corollaries, 
such as Sloane's conjecture for 
all bases 
(as we have already pointed out), and other conjectures by Erd{\"o}s, Furstenberg, and Lagarias that are reported in \cite{L}.  

\phantom{Here is some space at the top}

\begin{figure}[ht]
\hbox to \hsize {
{\hspace{-8.0em} \includegraphics[width=10cm]
{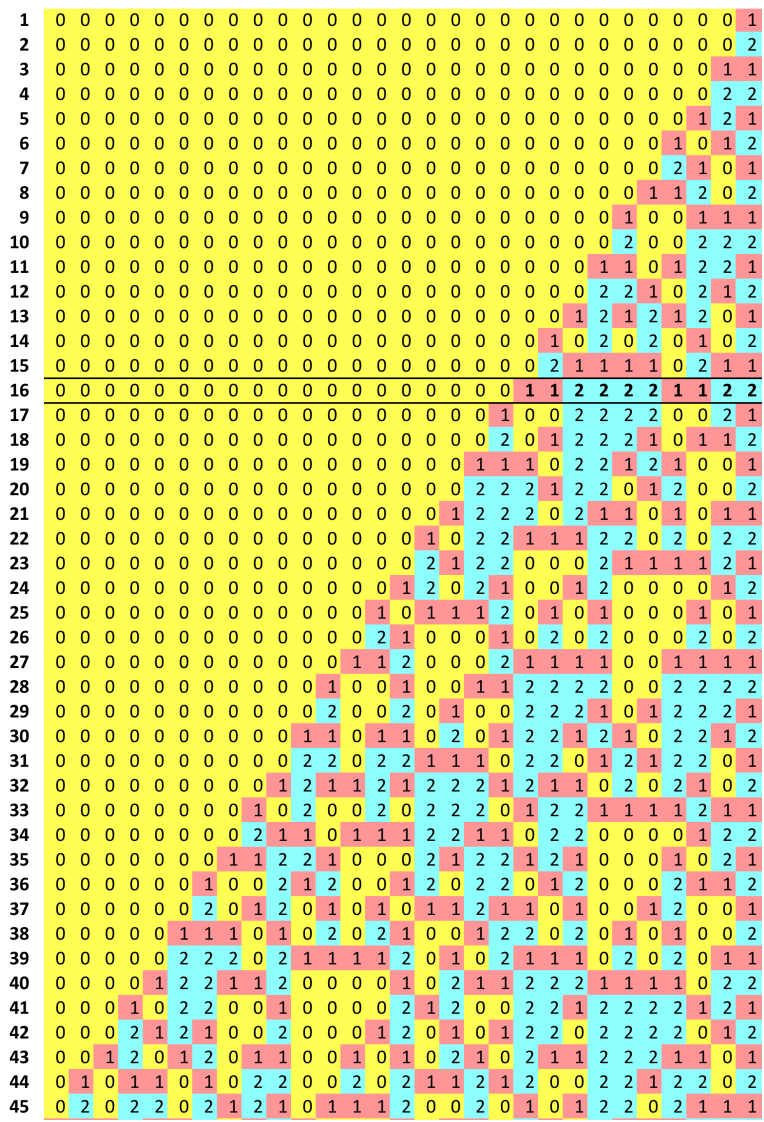}} }
\caption{Powers of $2$ in base $3$: the multiplication automaton of Example 1. It is conjectured that $2^{15}$ is the last 
power of $2$ whose base-$3$ expansion is zero-free.} 
\label{fig:PowersOf2Base3}
\end{figure}

\subsubsection*{Example 3} Note that in Definition \ref{quasicellular} we have assumed that the allowable set of primes $F$ from which 
the multipliers are chosen is {\it finite\/}. If this condition is removed, Conjecture \ref{conj:convvtoequi} becomes false, as the following example shows. 
We work in base $q=10$ here, but similar examples can be given in other bases.
Recall from elementary number theory that a {\it repunit\/} is a positive integer whose (base 10) expansion consists of a string
of ones, e.g. $1,\, 11,\, 111,\, 1111,\,\ldots$, etc. If a repunit $R$ has $n$ digits, then of course
\[
 R\;=\;\frac{10^n-1}{9}\;=\;11\cdots11 \ \ \ (n\ \textrm{times})\ .
\]
Let $k$ be any integer greater than $1$ and consider the infinite sequence of repunits $R_1<R_2<\cdots<R_n<\cdots$ given by
\[
 R_n\;=\;\frac{10^{2^n}-1}{9}
\]
Then the identity $10^{2^{n+1}}-1\;=\;\left(10^{2^n}-1\right) \left(10^{2^n}+1\right)$
shows that $R_n$ divides $R_{n+1}$, for each $n\geq 1$. Moreover, since $\mathrm{gcd}(10^{2^n}-1\,,\,10^{2^n}+1)=1$, we see that 
$R_{n+1}$ has at least one prime factor which does not divide $R_n$. Hence there exist a sequence of primes $\pi_1,\pi_2,\ldots, \pi_n,\ldots$ 
(not necessarily distinct, but ranging over infinitely many values) and a sequence of natural numbers $1=s_1<s_2<\cdots<s_n<\cdots$ 
such that $R_n=\pi_1\pi_2\cdots \pi_{s_n}$ for all $n$. 
But this means that each $R_n$ appears as a row of the (generalized) $\MA$ with base $q=10$, seed $a=1$ and with $F$ being the set of all primes appearing 
as factors of some $R_n$, which is infinite as we have shown. In particular, the line-by-line digits for this automaton cannot be
asymptotically equidistributed. Therefore Conjecture \ref{conj:convvtoequi} can fail to hold when $F$ is infinite.{\footnote{It is an old 
unsolved problem to know whether the sequence of repunits $1,\,11,\,111,\, 1111,\, 11111,\,\ldots$ written in the decimal system, 
say, contains infinitely many primes.}}

\subsection{Heuristics for convergence to equilibrium}\label{sub:Heuristic1}
Let us now give an chain of heuristic arguments lending further support in favor of Conjecture \ref{conj:convvtoequi}. 
The heuristics will be formulated for the most part in the context of Example 1 above ($q=3$ and $F=\{2\}$). 
The key idea is to model the deterministic
behaviour of an $\MA$ by a stochastic process. The irregularities observed in the 
rows of an $\MA$ are due to the carryover effect, and these will be interpreted as random. 
This will be our standing assumption. Stated informally: 
\emph{everything  is as if all would be random with appropriate distributions}.

\subsubsection{Heuristics I: Inhomogeneous Markov Chains}\label{IMCsection}

As pointed out already, we restrict our analysis almost entirely to the $\MA$ of Example 1. We make the further restriction of looking only at the evolution 
of the populations of the $3$ species given by the single digits $0,1,2$. We ignore, of course, all zeros in positions to the left of 
the last non-zero digit in each row. Although our multiplication automaton is a {\it deterministic\/} object, 
we will model its evolution -- more precisely the row-by-row evolution (or growth) of the population frequencies (or proportions) of the digits $0,1,2$ -- 
by a {\it stochastic process\/}. Once more, we leave to the reader to see how what
we propose here can be extended to the general situation of the conjecture. 

In the example at hand, the passage from row $n$ to row $n+1$ of the multiplication automaton involves only multiplication by $2$. 

The only possible values of the carryovers are $0$ and $1$; in the general case of multiplication of $\pi _i$ in base $q$, finding the maximal carryover is indeed the first computation to be done.   

Thus, for each $i\geq 0$ and each $n\geq 0$ we have either  $x_{i,n+1}=2x_{i,n} \mod 2$ if there is no carryover at the column position $i$, or $x_{i,n+1}=2x_{i,n} +1 \mod 2$ if there is a carryover at that position. This means that in each position $i$, the possible transitions of digits from one row to the next are 
$0\to 0$, $1\to 2$ and $2\to 1$ without carryover, and $0\to 1$, $1\to 0$ and $2\to 2$ with carryover. These spell out the 
{\it incidence matrix\/} 
\begin{equation}\label{MatriX3_2-0}
\bm{A} \;=\;
\begin{bmatrix} 
                   1&  1 & 0   \\
                   1&  0 & 1   \\
                   0&  1 & 1
\end{bmatrix}\ .
\end{equation}
This suggests that the evolution of the proportions $p_0^{(n)}, p_1^{(n)},p_2^{(n)}$ (with $p_0^{(n)}+p_1^{(n)}+p_2^{(n)}=1$) 
of the digits $0,1,2$ from row to row of the automaton can be modeled 
by a simple (homogeneous) Markov chain. We refer to the column vectors $\bm{p}_n= (p_0^{(n)},p_1^{(n)},p_2^{(n)})^t$ as {\it population vectors\/}. 
Our standing assumption in this context is that \emph{the allowable transitions from each given 
state have equal probabilities}. Hence this Markov chain has as its {\it transition matrix\/} the doubly stochastic matrix $\bm{P}=\frac{1}{2}\bm{A}$. 

The situation is not quite so simple, however, because this proposed scheme fails to take into account the fact that the {\it total population\/} $k_n$ 
occasionally  
increases as we go from row to row ($k_n\to\infty$ as $n\to\infty$)
depending on the leading digit, the multiplication factor, 
and the carryover inherited there.
This growth of population size is our only hope to see the asymptotic equidistribution of digits stated above. 
Indeed, we know from \S \ref{sec:tails} that the tails of size $\ell\geq 1$, namely the blocks $x_{\ell-1,n}\cdots x_{1,n}x_{0,n}$ (with $n=0,1,\ldots$) form
a {\it periodic sequence\/}. The same is true of the sequence of $\ell$-blocks $x_{\ell-1+r,n}\cdots x_{r+1,n}x_{r,n}$ for every fixed $r$. 
Hence there is no hope to observe equidistribution of digits along blocks of fixed sizes.

This compels us to change the model slightly, to accommodate the eventual increase in population size. Only two things can happen when we go from row 
$n$ to row $n+1$: either $k_{n+1}=k_n$, {\it i.e.\/}, the total population size stays the same, or $k_{n+1}=k_n+1$, {\it i.e.\/}, the total population size 
grows by $1$. In the first case, the new proportions at time $n+1$ can be obtained from the old proportions at time $n$ by multiplication with the matrix
$\bm{P}$. In the second case, we can think that the left-most digit at row $n$, namely $x_{k_n-1,n}$ gives rise not to one, but two new digits at row $n+1$, 
namely $x_{k_n-1,n+1}$ and $x_{k_n,n+1}$. Of these two, the last one is always equal to $1$. Since we either have 
$x_{k_n-1,n}=1$ or $x_{k_n-1,n}=2$, we now either have a non-zero probability of a transition $1\to 1$, or the probability of a transition 
$2\to 1$ is now slighty greater than $\frac{1}{2}$. In other words, the new population vector $\bm{p}_{n+1}$ is obtained from the old population vector
$\bm{p}_n$ by multiplication with one of the two stochastic matrices 
\begin{equation}\label{twomatrices}
\bm{Q}_n \;=\;
\begin{bmatrix} 
                   \frac{1}{2}&  \frac{1}{2} & 0   \\
                   \frac{1}{2+\epsilon_n}&  \frac{\epsilon_n}{2+\epsilon_n} & \frac{1}{2+\epsilon_n}   \\
                   0&  \frac{1}{2} & \frac{1}{2}
\end{bmatrix}\ \ \ ; \ \ 
\bm{R}_n \;=\;
\begin{bmatrix} 
                   \frac{1}{2}&  \frac{1}{2} & 0   \\
                   \frac{1}{2}&  0 & \frac{1}{2}   \\
                   0&  \frac{1+\epsilon_n}{2+\epsilon_n} & \frac{1}{2+\epsilon_n}
\end{bmatrix}\ .
\end{equation}
where $\epsilon_n=\frac{1}{k_n+1}$ accounts for the increase of the population of $1$'s by one. 

Summarizing, the evolution of population proportions in each row of the multiplication automaton in Example 1 can be modelled by 
an {\it Inhomogeneous Markov Chain\/} (IMC), given by a sequence of stochastic transition matrices $(\bm{P}_n)$ and (column) probability vectors 
$\bm{p}_{n}$ such that $\bm{p}_{n+1}^t=\bm{p}_{n}^t\bm{P}_n$. 
Here, each transition matrix $\bm{P}_n\in \{\bm{P},\bm{Q}_n,\bm{R}_n\}$. There is a well-developed theory 
of IMC's; see for instance \cite{B}, or \cite{IM}. 

As it turns out, convergence to equilibrium as expressed in Conjecture \ref{conj:convvtoequi} is tantamount, in the present example, to 
{\it strong ergodicity\/} of the IMC. Let us explain this point. For each pair of integers $n>m\geq 0$, let 
$\bm{P}(m,n)=\bm{P}_m\bm{P}_{m+1}\cdots \bm{P}_{n-1}$. Following, \cite[\S 6.8.1]{B}, we say that the IMC is {\it strongly ergodic\/} if 
there exists a row probability vector $\bm{q}$ such that
\[
 \lim_{n\to\infty} \sup_{\bm{p}}\, d_V(\bm{p}^t\bm{P}(m,n)\,,\,\bm{q}) = 0\ ,
\]
where the supremum is over all column probability vectors $\bm{p}=(p_0,p_1,p_2)^t$, and where $d_V$ is the so-called {\it distance in variation\/}, defined by
\[
 d_V(\bm{\alpha},\bm{\beta})\;=\; \frac{1}{2} \sum_{i=0}^{2} |\alpha_i-\beta_i| \ ,
\]
whenever $\bm{\alpha}=(\alpha_0,\alpha_1,\alpha_2)$ and $\bm{\beta}=(\beta_0,\beta_1,\beta_2)$ are (row) probability vectors. 
Recall (see \cite[\S 2.5]{B}, or 
any standard text on Markov chains) that a {\it stationary distribution\/} for a stochastic matrix $\bm{Q}$ is a column probability vector 
$\bm{v}$ such that $\bm{v}^t\bm{Q}=\bm{v}^t$. We have the following sufficient condition for strong ergodicity, as stated in \cite[Th.~6.8.5]{B} 
({\it cf.\/} \cite[Th. V.4.5, p. 170]{IM}). 

\begin{customthm}{C}
 If each transition matrix $\bm{P}_n$ has at least one stationary distribution and if there exists a stochastic matrix $\bm{P}_{\infty}$ such
that $\|\bm{P}_n - \bm{P}_{\infty}\|\to 0$ as $n\to\infty$, then the IMC is strongly ergodic.{\footnote{Here, $\|\cdot\|$ is the {\it max-norm\/}: 
if $\bm{A}=(a_{ij})$, then $\|\bm{A}\|=\max_{i,j} |a_{ij}|$.}}  
\end{customthm}

The hypotheses in this theorem are met in our example. Indeed, here we clearly see from \eqref{twomatrices} that $\|\bm{Q}_n-\bm{P}\|\to 0$ and
$\|\bm{R}_n-\bm{P}\|\to 0$ as $n\to \infty$; hence we can take $\bm{P}_\infty = \bm{P}$. Moreover, an easy computation shows that the probability vectors
\begin{align}\label{twovectors}
\bm{q}_n\;&=\; \left(\frac{2}{6+\epsilon_n}\,,\,\frac{2+\epsilon_n}{6+\epsilon_n}\,,\,\frac{2}{6+\epsilon_n}\right)^t \\
\bm{r}_n\;&=\; \left(\frac{2+2\epsilon_n}{6+5\epsilon_n}\,,\,\frac{2+2\epsilon_n}{6+5\epsilon_n}\,,\,\frac{2+\epsilon_n}{6+5\epsilon_n}\right)^t \nonumber
\end{align}
are stationary distributions for $\bm{Q}_n$ and $\bm{R_n}$, respectively.

The heuristic argument just presented supports the validity of Conjecture \ref{conj:convvtoequi} in the case of Example 1, as long as one accepts 
the standing assumption stated in the beginning of this section.
The formalism of IMC's can be similarly used to treat $\MA$'s with any other bases $q$ and allowable sets of multipliers $F$. 
The analysis of the two $\MA$'s in Example 2 is completely analogous, because in those two cases $F=\{3\}$ is still a unitary set.
Things become a bit different when $F$ has two or more elements. For example, take $q=5$ and $F=\{2,3\}$. 
Then instead of a single (unperturbed) stochastic matrix $\bm{P}$ we now have a pair of such matrices, one for each multiplier:
\begin{equation}\label{MatriX5-2_0}
\bm{P}_{2,5}= \frac{1}{2}
\begin{bmatrix} 
                   1&  1 & 0& 0& 0   \\
                   0&  0 & 1& 1& 0   \\
                   1& 0& 0& 0 & 1   \\
                   0& 1& 1& 0 & 0   \\
                   0& 0& 0& 1 & 1   \\
\end{bmatrix}\ \ \ ,\ \ 
\
\bm{P}_{3,5}= \frac{1}{3}
\begin{bmatrix} 
                   1&  1 & 1& 0& 0   \\
                   1&  0 & 0& 1& 1   \\
                   0& 1& 1& 1 & 0   \\
                   1& 1& 0& 0 & 1   \\
                   0& 0& 1& 1 & 1   \\
\end{bmatrix}\,.
\end{equation}
In each of these two matrices, the number of non-zero entries in each row equals the number of possible carryovers (upon multiplication 
by the corresponding prime).  

\subsubsection{Heuristics II: Block protection}\label{sub:Heuristic2}
As we have seen in the previous subsection, in the context of Example 1, each digit $x\in \mathcal{A}_3=\{0,1,2\}$ in a given position on row $n$ of our $\MA$ 
gives rise to a new digit $y\in\{y_0,y_1\}$ immediately below it on row $n+1$, where either $y=y_0=2x \mod 3$ or $y=y_1=2x+1\mod 3$, depending on whether 
the carryover at that position is $0$ or $1$, respectively. We have stated that, if the carryovers in row $n-1$ are assumed to be randomly placed, then 
the transition probabilities for the digit transitions $x\mapsto y$ are equal to $\frac{1}{2}$ (hence the stochastic matrix $\bm{P}$ introduced earlier). 
Strictly speaking, this is not correct. 
But as we will see below, the statement is close to being true provided the digits on row $n$ are already approximately uniformly distributed in the 
following (finite) sense. We say that a $k$-block $B=x_1x_2\cdots x_k\in\A_3^k$ {\it occurs\/} in an $N$-block $\omega=z_1z_2\cdots z_N\in \A_3^N$ if there exists 
$0\leq j\leq N-k$ such that $z_{j+1}=x_1\,,\,z_{j+2}=x_2\,,\,\ldots \,,\,z_{j+k}=x_k$.

\begin{defi}
 An $N$-block $\omega\in \mathcal{A}_3^N$ is said to be $k$-balanced (for a given $k$ with $1\leq k<N-3^k$) if for each 
$k$-block $B\in \mathcal{A}_3^k$ the total number of ocurrences of $B$ in $\omega$ divided by the total number of $k$-blocks occurring in $\omega$ is equal to 
$1/3^{k}$.{\footnote{Note that the total number of $k$-blocks is $3^k$.}}
\end{defi}

The rough idea is that if row $n$ of our $\MA$ is $k$-balanced and we count, for a given allowable transition $x\mapsto y$, how many times this transition occurs 
when we go from row $n$ to row $n+1$ and divide that number by the total number of observed transitions which start with the digit $x$, then this ratio -- 
which we call an {\it empirical transition probability\/} -- is approximately equal to $\frac{1}{2}$, with an error which is exponentially small in $k$. 
We call this phenomenon {\it block protection\/}. The result can be formulated as follows.

\begin{prop}\label{protectblock}
 If the $n$-th row $\{x_{i,n}\}_{1\leq i\leq k_n}$ is $k$-balanced for some $k\geq 1$, then the empirical probability $p_{xy}(n)$ of each transition 
$x\mapsto y$ (computed from the digit 
transitions $x_{i,n}\mapsto x_{i,n+1}$)  satisfies $\left|p_{xy}(n) -\frac{1}{2}\right| \leq \frac{1}{2\cdot 3^k}$.
\end{prop}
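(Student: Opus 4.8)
The plan is to reduce the statement to a purely combinatorial estimate about how the deciding carryover is distributed along the $n$-th row, and then to exploit a $0\leftrightarrow 2$ symmetry together with the $k$-balance hypothesis.

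First I would make precise how the carryover that selects each transition is produced. Going from row $n$ to row $n+1$ (multiplication by $2$ in base $3$), the carry entering position $i$ is $c_{i-1,n+1}$, and from rules (i)--(ii) of Definition \ref{quasicellular} one checks directly that a digit $0$ in row $n$ annihilates the incoming carry, a digit $2$ forces an outgoing carry equal to $1$, and a digit $1$ transmits the carry unchanged. Consequently the carry into position $i$ equals $1$ if and only if, scanning the digits $x_{i-1,n}, x_{i-2,n},\dots$ of row $n$ from position $i-1$ towards the less significant end, the first digit different from $1$ is a $2$; it equals $0$ when that first non-$1$ digit is a $0$. Since the target $y$ of a transition $x\mapsto y$ is determined by $x$ together with this single carry bit, the empirical probability $p_{xy}(n)$ is exactly the fraction, among the positions $i$ with $x_{i,n}=x$, whose incoming carry takes the value dictated by $y$. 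Thus it suffices to show that, conditioned on the digit at position $i$ being $x$, the incoming carry equals $1$ with frequency $\tfrac12$ up to the asserted error.

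Next I would introduce the involution $\sigma$ on blocks over $\A_3$ that interchanges the symbols $0$ and $2$ and fixes $1$. By the carry description above, $\sigma$ fixes every maximal run of $1$'s and interchanges the two possible deciding digits, so it sends incoming-carry-$1$ positions to incoming-carry-$0$ positions while leaving the digit $x_{i,n}=x$ untouched. The key point is that whenever the first non-$1$ digit to the right of position $i$ occurs within the next $k-1$ places, the carry is decided by a block of length $\le k$ beginning with $x$, and the $k$-balance hypothesis says that each such block and its $\sigma$-image occur equally often in row $n$. Hence all these ``short-range decided'' positions pair up and contribute exactly $\tfrac12$ to $p_{xy}(n)$; the only positions that can unbalance the count are the ``ambiguous'' ones, namely those immediately followed by a run of at least $k-1$ ones, i.e.\ the occurrences of the single $k$-block consisting of $x$ followed by $k-1$ ones. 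By $k$-balance these number a fraction $1/3^{k}$ of the row, while the positions carrying the digit $x$ form a fraction $1/3$. Writing $N_x$ for the number of positions with digit $x$, $U$ for the number of ambiguous positions, and $u_1\in[0,U]$ for those among them with incoming carry $1$, a short count gives
\[
 p_{xy}(n)-\frac12 \;=\; \frac{2u_1-U}{2N_x}\ , \qquad \text{whence} \qquad \Bigl|p_{xy}(n)-\frac12\Bigr|\;\le\;\frac{U}{2N_x}\ ,
\]
which is of order $3^{-k}$.

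The hardest part will be the careful treatment of the ambiguous all-ones windows and of the finitely many boundary positions near the least significant end of the row, where a run of $1$'s can meet the prescribed initial carryover $(c_{0,n})$: it is precisely the frequency of the longest controlled run of $1$'s that fixes the exponential constant. Pinning that frequency down so as to reach the sharp bound $\tfrac{1}{2\cdot 3^{k}}$, rather than merely a constant multiple of $3^{-k}$, is where the exact form of the $k$-balance hypothesis must be used, and one must also verify that the $O(k)$ edge positions do not spoil the estimate.
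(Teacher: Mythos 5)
Your overall strategy is the same as the paper's: the carry entering position $i$ is decided by the nearest non-$1$ digit to the right of $i$ (a $0$ kills the incoming carry, a $2$ creates one, a $1$ transmits it), so a transition is ambiguous only when the deciding window consists entirely of $1$'s, and the decided cases split evenly between the two possible targets. The paper packages this in the explicit formula $\Phi_k(x,B,c)=2x+\lfloor (2v(B)+c)/3^k\rfloor \bmod 3$ and proves the even split by counting block values below and above $(3^k-1)/2$ (Lemma~\ref{phiproperties}); your $0\leftrightarrow 2$ involution $\sigma$ is precisely the map $v(B)\mapsto 3^k-1-v(B)$ on block values, so your symmetry argument is a clean, computation-free substitute for part (ii) of that lemma, and your carry-propagation description is an equivalent substitute for part (i). That portion of the proposal is correct and arguably more transparent than the paper's.

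The genuine gap is the constant, and you have correctly located it yourself. You take the deciding window to be the $k-1$ digits to the right of $x$, so that $x$ together with its window forms a $k$-block and the $k$-balance hypothesis applies verbatim; the ambiguous block is then $x1^{k-1}$, whose frequency among the $x$-positions is $(1/3^k)/(1/3)=3^{-(k-1)}$, and your own inequality $\bigl|p_{xy}(n)-\tfrac12\bigr|\le U/(2N_x)$ then yields only $\tfrac{1}{2\cdot 3^{k-1}}$, three times the claimed bound. The paper reaches $\tfrac{1}{2\cdot 3^{k}}$ by taking the window to be the $k$ digits strictly to the right of $x$ and applying $k$-balance to that window, i.e.\ treating the $k$-block following an $x$-position as equidistributed over all $3^k$ possibilities; once the conditioning on the digit $x$ is made explicit, this is really a statement about $(k+1)$-blocks, which $k$-balance as defined does not literally supply. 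So you must either settle for the weaker constant $\tfrac{1}{2\cdot 3^{k-1}}$ (which is all your hypotheses honestly give, and is equally adequate for the heuristic purpose of \S\ref{sub:Heuristic3}) or import the same extra equidistribution the paper uses implicitly. As written, the proposal does not establish the stated inequality, and its closing paragraph defers exactly the step that would be needed to do so; the worry about the $O(k)$ boundary positions, by contrast, is harmless, since those affect $p_{xy}(n)$ only by $O(k/k_n)$ and the paper ignores them as well.
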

 
The proof is deferred to Appendix \ref{appendix}.

Instead of looking at transitions $x\mapsto y$ between digits, we may consider more generally transitions $X\mapsto Y$  between blocks 
$X, Y$ of a fixed length $\ell$. Here $X=x_{i+\ell-1,n}\cdots x_{i+1,n}x_{i,n}$ lies on row $n$ of our $\MA$ while  
$Y=x_{i+\ell-1,n+1}\cdots x_{i+1,n+1}x_{i,n+1}$ lies on row $n+1$ immediately below $X$. For each given $X$ there are two possible values for $Y$ 
(depending on the carryover $c_{i-1,n}\in\{0,1\}$). One can talk about block protection of such {\it blocks\/} in the same way we talked about block protection
of {\it digits\/}. An exact analogue of Proposition \ref{protectblock} holds true if we simply replace $x$ by $X$ and $y$ by $Y$, and the proof is similar. 
The end result is that {\it if row $n$ of our $\MA$ is $k$-balanced\/}, then the empirical transition probabilities $p_{XY}(n)$ differ from $\frac{1}{2}$ 
by an error smaller than $3^{-k}$ (where $k$ is the length of the protecting blocks). 
To be more specific, let $\bm{P}_{\ell, n}^{3,2}$ be the stochastic matrix that gives the actual transitions of $\ell$-blocks from row $n$ to row $n+1$, and  
{\it assume\/} that row $n$ is $k$-balanced. For $\ell=2$ there are $3^2=9$ blocks. If all allowed transitions were equally likely, we would get the 
$9\times 9$ stochastic matrix
\begin{equation}\label{MatriX3-2-2_0}
\bm{P}_{2}^{3,2}=\frac{1}{2}
\begin{bmatrix} 
                   1&  1 & 0& 0& 0 & 0& 0& 0& 0   \\ 
                   0&  0 & 1& 1& 0 & 0& 0& 0& 0   \\ 
                   0&  0 & 0& 0& 1 & 1& 0& 0& 0   \\ 
                   0&  0 & 0& 0& 0 & 0& 1& 1& 0   \\ 
                   1&  0 & 0& 0& 0 & 0& 0& 0& 1   \\ 
                   0&  1 & 1& 0& 0 & 0& 0& 0& 0   \\ 
                   0&  0 & 0& 1& 1 & 0& 0& 0& 0   \\ 
                   0&  0 & 0& 0& 0 & 1& 1& 0& 0   \\ 
                   0&  0 & 0& 0& 0 & 0& 0& 1& 1   \\ 
\end{bmatrix}\,.
\end{equation}
The actual stochastic matrices $\bm{P}_{2,n}^{3,2}$ differ from the above matrix only in its non-zero entries, and by less than $3^{-k}$, {\it provided row $n$ is 
$k$-balanced\/}.
More generally, for arbitrary $\ell$, if all allowed transitions $X\mapsto Y$ were equally likely, the associated stochastic 
matrix would be
\begin{equation}\label{MatriX3-2-b_0}
\bm{P}_{\ell}^{3,2}=\frac{1}{2}\left[\delta _{i,(2i-1)\bmod 3^\ell }+\delta _{i,2i\bmod 3^\ell}\right]_{1\leq i,j\leq 3^\ell} \ ,
\end{equation} 
where $\delta_{i,j}$ stands for Kronecker's delta. Once again, the actual stochastic matrices $\bm{P}_{\ell,n}^{3,2}$ differ 
from the matrix in \eqref{MatriX3-2-b_0} only in its non-zero entries, and by less than $3^{-k}$ {\it if row $n$ is 
$k$-balanced\/}. 

\subsubsection{Heuristics III: How to get a good start}\label{sub:Heuristic3}

Now we come to the last part of the heuristics. Almost all of what was done in parts I and II of the heuristics can 
be rigorously proved. This is not the case, so far, with the arguments in the present section. The discussion will be more informal. 

The IMC formalism as described in \S \ref{IMCsection} can only work if the stochastic matrices $\bm{P}_{\ell,n}^{3,2}$ 
introduced in \S \ref{sub:Heuristic2} 
become asymptotically closer and closer to the corresponding matrices $\bm{P}_{\ell}^{3,2}$ as $n\to \infty$, for any given $\ell$. 
Thus, we need
\begin{equation}\label{stochpell}
 \lim_{n\to\infty} \left\|\bm{P}_{\ell,n}^{3,2} - \bm{P}_{\ell}^{3,2}\right\| \;=\;0 \ ,
\end{equation}
for any block-length $\ell$. This is indeed corroborated by our computational evidence.  Proposition \ref{protectblock} gives us a 
much weaker, conditional result: {\it if\/} the $n$-th row $N_n$ of our 
$\MA$ is $k$-balanced, {\it then\/} $\|\bm{P}_{\ell,n}^{3,2} - \bm{P}_{\ell}^{3,2}\|\leq 3^{-k}$. It is of course too much to expect 
that a row of our $\MA$ will be perfectly $k$-balanced. But it is not difficult to generalize Proposition \ref{protectblock} so that 
the expression ``$k$-balanced'' is replaced  by a suitable concept of ``almost $k$-balanced'' (which can be stated in an 
appropriate quantitative way), so that the estimate in the conclusion is almost the same, with $3^{-k}$ replaced by $O(3^{-k})$, say. 
Let us agree to say that the matrix $\bm{P}_{\ell,n}^{3,2}$ is {\it $k$-reasonable\/} if $\|\bm{P}_{\ell,n}^{3,2} - \bm{P}_{\ell}^{3,2}\|= O(3^{-k})$. 
The stability of the arguments presented so far then tell us that if row $n$ of our $\MA$ is almost $k$-balanced, then the stochastic matrices 
$\bm{P}_{\ell,n+j}^{3,2}$, $j=0,1,\ldots,s-1$  remain $k$-reasonable for a certain number $s$ of steps.

Thus, in order to justify the use of the IMC formalism and get a proof of Conjecture \ref{conj:convvtoequi} (at least for the case of the $\MA$ of Example 1), 
we need two things:
\begin{enumerate}
\item[(1)] To guarantee the possibility of a {\it good start\/}, {\it i.e.,}  a row $n$ of our automaton which is almost $k$-balanced, 
for as large a value of $k$ as possible.  
\item[(2)] To make sure that
throughout the steps $j=0,1,\ldots, s-1$ 
the longer blocks in $N_{n+j}$ (\emph{i.e.,} those whose evolutions are not protected by the distributions of even longer blocks) 
are not progressively getting too unevenly distributed, with consequences that would then cascade down to shorter and 
shorter blocks.  
\end{enumerate}

At least at a rough level, point (1) is not difficult to achieve thanks to our results 
in \S \ref{sec:erg1}. Indeed, by what we have seen in that section, adding a decimal point in front of $N_n$ (the $n$-row of our $\MA$), we get the orbit
$x_n=T_{2,3}^n(x_0)$ of $x_0=[0.1]_3$ under the circle map $T_{2,3}$. Since all orbits of $T_{2,3}$ are dense, given any $k$-balanced string 
$Y=y_1y_2\cdots y_m\in \mathcal{A}_3^m$ (with $y_1\neq 0$), there exists $n\geq 1$ such that $x_n$ has the string $Y$ as its 
$m$-prefix, or head, which we call {\it good\/}. For each 
such $n$ with a good head, the corresponding $N_n$ has a suffix, or tail, of a certain length $r$. Note that we cannot choose $r$ {\it a-priori}: if one 
uses the density of orbits of $T_{2,3}$,  one is forced to accept the value of $r$ imposed by the choice of the good head $Y$.

Now, as we iterate further and consider the 
successive rows $N_n, N_{n+1}, \ldots,\break N_{n+j},\ldots$ of the automaton, the tail of length $r$ generates a periodic sequence of tails 
lying beneath it (all with the same length $r$), whereas 
the heads increase in size. Clearly, the constant character of $r$ as $j$ increases makes the 
tails irrelevant in the computation of asymptotic proportions of symbols or other blocks. The problem then becomes point (2) 
above for the heads. This point is experimentally verified but is mathematically beyond our reach at this writing, 
contrary to point (1) as formulated above.




\subsection{Appendix: Proof of Proposition \ref{protectblock}}\label{appendix}

As promised, in this section we prove Proposition \ref{protectblock}. 
The proof will require the two lemmas presented below. Let us fix $k\geq 1$. It is an easy consequence of Proposition \ref{prop:locality} that, for each 
$i>k$ and each $n\geq 1$, the element $x_{i,n+1}$ of our $\MA$ is completely determined by the following data:
\begin{enumerate}
 \item The element $x_{i,n}$ lying immediately above $x_{i,n+1}$;
 \item The $k$-block $B_{i,n,k}=x_{i-1,n}x_{i-2,n}\cdots x_{i-k,n}\in \A_3^k$ lying to the right of $x_{i,n}$;
 \item The carryover $c_{i-k-1,n}$, {\it i.e.\/} the carryover immediately to the right of $B_{i,n,k}$.t
\end{enumerate}
In other words, there exists a function $\Phi_k:\,\A_3\times \A_3^k\times \{0,1\} \to \A_3$ such that
\begin{equation}\label{phik}
 x_{i,n+1}\;=\; \Phi_k\left(x_{i,n}\,,\,B_{i,n,k}\,,\,c_{i-k-1,n}\right)\ .
\end{equation}
In fact, the function $\Phi_k$ can be explicitly computed. Define the {\it value\/} of a $k$-block $B=x_1x_2\cdots x_k$ by 
\begin{equation}\label{valueblock}
 v(B)\;=\;\sum_{j=1}^k x_j3^{k-j}\ .
\end{equation}
Note that every block $B$ is uniquely determined by its value. We now have the following lemma.
\begin{lemma}
 For each digit $x\in \A_3$, each $k$-block $B=x_1x_2\cdots x_k\in \A_3^k$ and each carryover $c\in\{0,1\}$, we have
\begin{equation}\label{phiformula}
 \Phi_k(x,B,c)\;=\; 2x + \left\lfloor \frac{2v(B)+c}{3^k} \right\rfloor \ \ \mod 3 \ .
\end{equation}

\end{lemma}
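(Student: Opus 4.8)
The goal is to derive the closed formula \eqref{phiformula} for $\Phi_k(x,B,c)$ directly from the local recursion that defines the $\MA$, namely rules (i) and (ii) of Definition \ref{quasicellular} with multiplier $\pi_n = 2$. The plan is to track how the multiplication by $2$ at column $i$ is affected by the carryover that enters from the right, and to show that this incoming carryover is completely controlled by the value $v(B)$ of the protecting $k$-block together with the carryover $c = c_{i-k-1,n}$ sitting just to its right.

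First I would make precise what "value" computation is being done. Multiplying the row $N_n$ by $2$ produces, at each column, the digit $x_{i,n+1}$ and a carryover $c_{i,n}$ out to the left, governed by $x_{i,n+1} = 2x_{i,n} + c_{i-1,n} \bmod 3$ and $c_{i,n} = \lfloor (2x_{i,n} + c_{i-1,n})/3 \rfloor$. So the only thing outside the ``immediate'' data $x_{i,n}$ that enters the formula for $x_{i,n+1}$ is the carryover $c_{i-1,n}$ entering position $i$. The key step is therefore to compute $c_{i-1,n}$ in terms of $B = x_{i-1,n}x_{i-2,n}\cdots x_{i-k,n}$ and $c_{i-k-1,n}=c$. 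I would do this by viewing the block $B$ together with the incoming carry $c$ as encoding a single integer: the carry propagated out of the top of the block after doubling is exactly
\[
 c_{i-1,n} \;=\; \left\lfloor \frac{2\,v(B) + c}{3^{k}} \right\rfloor .
\]
To justify this, I would unfold the carry recursion across the $k$ positions of the block: starting with the incoming carry $c$ at the rightmost relevant position, the quantity $2\,v(B)+c$ is precisely the integer whose base-$3$ expansion, computed digit-by-digit, reproduces the successive carries, and the carry emerging past the top digit of $B$ is the high part $\lfloor (2v(B)+c)/3^k\rfloor$. This is most cleanly seen by induction on $k$: doubling the one-digit block with incoming carry gives a carry $\lfloor (2x+c)/3\rfloor$, and each further digit to the left refines the quotient in exactly the way demanded by \eqref{valueblock}.

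Substituting this expression for $c_{i-1,n}$ into $x_{i,n+1} = 2x_{i,n} + c_{i-1,n} \bmod 3$ with $x = x_{i,n}$ yields \eqref{phiformula} immediately. The main obstacle I anticipate is purely bookkeeping: getting the indexing of the carry propagation across the block right, so that the floor of $(2v(B)+c)/3^k$ really is the carry entering position $i$ and not off by one position or by a factor of the base. I would nail this down with the inductive computation sketched above rather than by an opaque direct manipulation. Everything else — reducing modulo $3$ and reading off the value function — is routine, since by the remark following \eqref{valueblock} each block is uniquely determined by its value, so the formula is well defined as a function on $\A_3 \times \A_3^k \times \{0,1\}$.
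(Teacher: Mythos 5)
Your proposal is correct and follows essentially the same route as the paper: both arguments reduce to identifying the carry entering position $i$ as $\lfloor(2v(B)+c)/3^k\rfloor$, the paper by doubling the value of the concatenated block $xB$ and reading off the digit above the $3^k$ place, you by unfolding the carry recursion across $B$ by induction on $k$. Your explicit induction merely makes precise the step the paper states in words (that the row beneath $xB$ is the base-$3$ expansion of $2v(xB)+c$), so there is no substantive difference.
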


\begin{proof}
 The concatenated block $xB=xx_1x_2\cdots x_k$ has value $v(xB)=3^kx+v(B)$. In order to compute $y=\Phi_k(x,B,c)$, we multiply this value 
by $2$ and add the carryover $c$, getting the number $w=2v(xB)+c$. This number is written in base $3$ and the resulting block is placed 
beneath $xB$. The digit $y$ (immediately below $x$) is precisely the $k$-th digit of $w$ from right to left, {\it i.e.,} $y=\lfloor w/3^k\rfloor$.
Hence we have
\begin{align*}
 y\;&=\; \left\lfloor \frac{2(3^kx+v(B))+c}{3^k}\right\rfloor \ \ \mod 3 \\
    &=\; 2x + \left\lfloor \frac{2v(B)+c}{3^k}\right\rfloor \ \ \mod 3\ ,
\end{align*}
and this proves \eqref{phiformula}.
\end{proof}
Note that the last term in the right-hand side of \eqref{phiformula} is equal to either $0$ or $1$. 

Now, it turns out that $\Phi_k$ is ``almost'' independent of the variable $c\in\{0,1\}$. 
Roughly speaking, the only way the carryover $c$ to the right of the $k$-block $B$ can influence the value of the digit $y$ immediately 
below $x$ (on the left of $B$) is if $B$ happens to be the block $\mathbf{1}_k=11\cdots 1$ ($k$ times). Every other block will contain 
in some position a $0$ or a $2$; upon multiplication by $2$ these yield $0$ and $1$, respectively, and any carryover effect coming from the right of that 
position will not go through to the left of it. This is part (i) of the following lemma. 

\begin{lemma}\label{phiproperties}
 The function $\Phi_k$ has the following properties.
\begin{enumerate}
 \item[(i)] If $x\in \A_3$ and $B\in \A_3^k$, then $\Phi_k(x,B,0)\neq \Phi_k(x,B,1)$ if and only if $B=\mathbf{1}_k$;
 \item[(ii)] If $x\in \A_3$ and $y\in\{y_0,y_1\}$, then 
\[
 \#\left\{ B\in \A_3^k\setminus\{\mathbf{1}_k\}\,:\;\Phi_k(x,B,c)=y\right\}\;=\; \frac{3^k-1}{2}
\]
\end{enumerate}
\end{lemma}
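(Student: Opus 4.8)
The plan is to extract both statements directly from the closed formula \eqref{phiformula}, reducing them to elementary facts about the single integer quantity $\lfloor(2v(B)+c)/3^k\rfloor$. The crucial preliminary observation is that the value map $v$ of \eqref{valueblock} is a bijection from $\A_3^k$ onto $\{0,1,\ldots,3^k-1\}$, so that ranging over all blocks $B$ is the same as ranging over all integers $v(B)$ in that interval. Since then $0\le 2v(B)+c\le 2(3^k-1)+1<2\cdot 3^k$, the floor term in \eqref{phiformula} takes only the two values $0$ and $1$; and as the summand $2x$ is common to both evaluations, the entire dependence of $\Phi_k$ on $B$ and $c$ is carried by this single bit.

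For part (i), I would observe that, with $x$ fixed, $\Phi_k(x,B,0)\ne\Phi_k(x,B,1)$ holds if and only if $\lfloor 2v(B)/3^k\rfloor$ and $\lfloor(2v(B)+1)/3^k\rfloor$ differ modulo $3$. Adding $1$ to the numerator can raise a floor by at most $1$, and it does so precisely when $3^k$ divides $2v(B)+1$. Within the admissible range $0\le 2v(B)\le 2(3^k-1)$, the congruence $2v(B)+1\equiv 0 \pmod{3^k}$ forces $2v(B)\in\{3^k-1,\,2\cdot 3^k-1\}$; a parity check (note that $3^k$ is odd, so $3^k-1$ is even while $2\cdot 3^k-1$ is odd, whereas $2v(B)$ is always even) discards the second possibility and leaves the unique solution $v(B)=(3^k-1)/2$. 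Since $(3^k-1)/2=\sum_{j=1}^k 3^{k-j}$ is exactly the value of the all-ones block, this singles out $B=\mathbf{1}_k$, which is the assertion.

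For part (ii), I would invoke part (i) to note that for $B\ne\mathbf{1}_k$ the output $\Phi_k(x,B,c)$ is independent of $c$, equalling $y_0$ precisely when the floor is $0$ and $y_1$ precisely when the floor is $1$. Translating through $v$, the floor is $0$ exactly for $v(B)\le(3^k-1)/2$ and is $1$ exactly for $v(B)\ge(3^k+1)/2$; these two intervals of integers contain $(3^k+1)/2$ and $(3^k-1)/2$ values, respectively. Removing $B=\mathbf{1}_k$, whose value $(3^k-1)/2$ lies in the first interval, reduces its count by one, so both classes end up with exactly $(3^k-1)/2$ blocks, as claimed.

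I expect the only genuine care to lie in the part (ii) bookkeeping: one must verify that the single excluded block $\mathbf{1}_k$ falls on the $y_0$ side and that subtracting it restores the exact symmetry between the two counts. The parity argument in part (i) is the one step that is not a mere reading of the formula, but it is short; everything else follows immediately from \eqref{phiformula} through the bijection $v$.
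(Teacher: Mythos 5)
Your proof is correct and follows essentially the same route as the paper's: both arguments reduce everything to the floor term $\lfloor(2v(B)+c)/3^k\rfloor$ in \eqref{phiformula}, identify $v(B)=(3^k-1)/2$ (the all-ones block) as the unique value where the two floors disagree, and count the remaining blocks through the bijection $v$. The only cosmetic differences are that you phrase (i) via the divisibility criterion $3^k\mid 2v(B)+1$ plus a parity check (the second candidate $2v(B)=2\cdot 3^k-1$ is in fact already outside the admissible range), and in (ii) you count the $c=0$ classes first and then remove $\mathbf{1}_k$, whereas the paper restricts to $B\neq\mathbf{1}_k$ from the outset; both bookkeepings give the same counts.
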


\begin{proof}
 First, let $B$ be a $k$-block such that $\Phi_k(x,B,0)\neq \Phi_k(x,B,1)$. Then from \eqref{phiformula} we see that
\begin{equation}\label{phiprop1}
 0\;=\;\left\lfloor \frac{2v(B)}{3^k} \right\rfloor \;\neq\; \left\lfloor \frac{2v(B)+1}{3^k} \right\rfloor \;=\;1\ .
\end{equation}
Hence we must have simultaneously
\[
  \frac{2v(B)}{3^k} \;<\;1 \ \ \mathrm{and}\ \ \ \frac{2v(B)+1}{3^k} \;\geq\; 1
\]
From these two inequalities we deduce that $3^k-1\leq 2v(B)< 3^k$, and therefore $v(B)=\frac{1}{2}(3^k-1)$. This means that 
$B=\mathbf{1}_k$. Conversely, if $B=\mathbf{1}_k$ then by a simple computation we see that \eqref{phiprop1} holds true. This proves (i). 

To prove (ii), there are two cases to consider: $y=y_0$ and $y=y_1$. In either case, if $B\in \A_3^k\setminus\{\mathbf{1}_k\}$ then by part (i) we have
\begin{equation}\label{phipro2}
 \left\lfloor \frac{2v(B)}{3^k} \right\rfloor \;=\; \left\lfloor \frac{2v(B)+1}{3^k} \right\rfloor
\end{equation}
If $y=y_0$, then both sides in \eqref{phipro2} are equal to $0$, and we get  $0\leq v(B)< \frac{1}{2}(3^k-1)$. The number of blocks $B$ satisfying these 
inequalities is precisely $\frac{1}{2}(3^k-1)$. If $y=y_1$, then both sides in \eqref{phipro2} are equal to $1$, and this time we deduce that 
$\frac{1}{2}(3^k+1)\leq v(B)\leq 3^k-1$. The number of blocks $B$ satisfying these last 
inequalities is also $\frac{1}{2}(3^k-1)$. This finishes the proof.
\end{proof}

\phantom{Here is some space at the top}

\phantom{Here is some more space}
\begin{figure}[t]
\hbox to \hsize {
{\hspace{-11.0em} \includegraphics[width=10.0cm]
{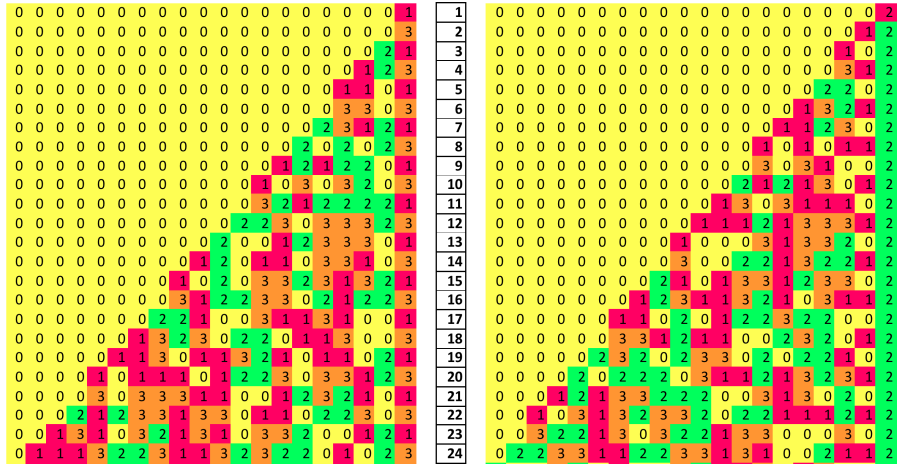}} }
\caption{The two multiplication automata of Example 2. } 
\label{fig:HeadsBase4}
\end{figure}
\bigskip

\subsubsection*{Proof of Proposition \ref{protectblock}}
Consider an allowable transition of digits $x\mapsto y$ from row $n$  to row $n+1$ of our $\MA$. We are assuming that row $n$ is $k$-balanced. 
In order to compute the empirical probability $p_{xy}(n)$ of such transition, we need to count how many times this transition happens and 
divide it by the total number of transitions which start with $x$ on row $n$.  For this purpose, first we count how many blocks 
$B\in \A_3^k\setminus\{\mathbf{1}_k\}$ are such that $\Phi_k(x,B,c)=y$. The answer is given by Lemma \ref{phiproperties}(ii): 
there are $\frac{1}{2}(3^k-1)$ such blocks. Since row $n$ of our $\MA$ is $k$-balanced, the proportion of such blocks in that row is 
therefore the quotient $\frac{1}{2}(3^k-1)/3^k$. This already tells us that \begin{equation}\label{empirc1} p_{xy}(n) \geq \frac{1}{2}-\frac{1}{2\cdot 3^k}
\end{equation}
We still have to account for the occurences of the block $B=\mathbf{1}_k$. Let $c\in \{0,1\}$ be such that $\Phi_k(x,\mathbf{1}_k,c)=y$. 
Each occurrence of $B=\mathbf{1}_k$ on row $n$ for which the carryover immediately to the right of $B$ equals $c$ contributes to the desired 
empirical probability. Since the proportion of such occurrences is at most $\frac{1}{3^k}$, this shows that 
\begin{equation}\label{empirc2}
 p_{xy}(n)\leq \frac{1}{2}-\frac{1}{2\cdot 3^k}+ \frac{1}{3^k}= \frac{1}{2}+\frac{1}{2\cdot 3^k}\ . 
\end{equation}
Combining \eqref{empirc1} with \eqref{empirc2} we get the inequality in the statement. \hfill\qed


\phantom{Here is some space at the top}


\phantom{Here is some space at the top}
\bigskip
\bigskip

\begin{figure}[b]
\hbox to \hsize {
{\hspace{-11.0em} \includegraphics[width=8cm]
{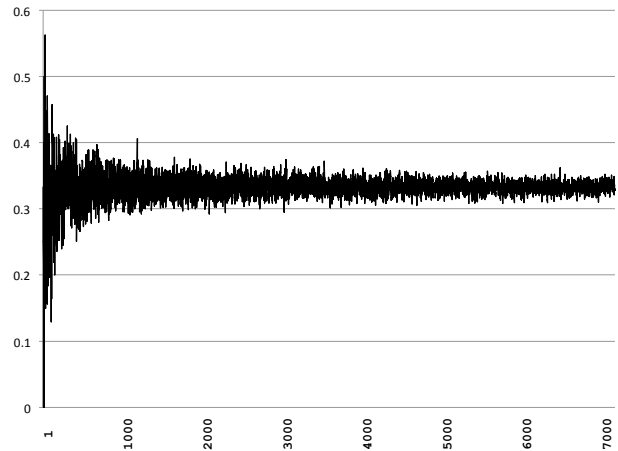}} }
\caption{This figure plots the proportion of the digit $0$ in the base-$3$ expansion of $2^n$ as a function of $n$; note the 
apparent convergence to $\frac{1}{3}$. } 
\label{fig:density10}
\end{figure}


\newpage

\phantom{Here is some space at the top}
\bigskip

\begin{figure}[hb]
\hbox to \hsize {
{\hspace{-11.0em} \includegraphics[width=7cm]
{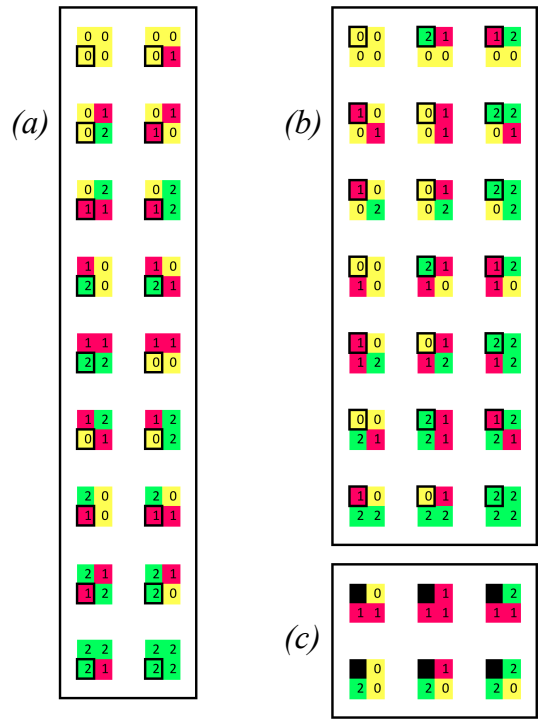}} }
\caption{Carryover structure for the two automata (horizontal and vertical) associated to Example 1.} 
\label{fig:qauto}
\end{figure}

\newpage



%
\section*{Acknowledgements}
The authors are grateful to George Hentchel for bringing this problem to the attention of C.T., 
and for sharing insights on how asymptotic equidistribution of digits would solve Sloane's problem in base $3$.

\end{document}